\documentclass[11pt]{amsart}

\usepackage{amsmath}
\usepackage{amssymb}
\usepackage{amsfonts}
\usepackage{amsthm}
\usepackage{enumerate}
\usepackage{bbm}
\usepackage[mathscr]{eucal}

\usepackage{graphicx}
\usepackage{verbatim}

\newcommand{\ci}[1]{_{{}_{\scriptstyle{#1}}}}

\newcommand{\Be}{\begin{equation}}
\newcommand{\Ee}{\end{equation}}

\newcommand{\Bm}{\begin{multline}}
\newcommand{\Em}{\end{multline}}
\newcommand{\Bea}{\begin{eqnarray}}
\newcommand{\Eea}{\end{eqnarray}}
\newcommand{\Beas}{\begin{eqnarray*}}
\newcommand{\Eeas}{\end{eqnarray*}}
\newcommand{\Benu}{\begin{enumerate}}
\newcommand{\Eenu}{\end{enumerate}}
\newcommand{\Bi}{\begin{itemize}}
\newcommand{\Ei}{\end{itemize}}

\def\tu{\widetilde u}
\def\tom{\widetilde \omega}

\def\intslash{\rlap{\kern  .32em $\mspace {.5mu}\backslash$ }\int}
\def\qsl{{\rlap{\kern  .32em $\mspace {.5mu}\backslash$ }\int_{Q_x}}}

\def\vth{\vartheta}

\def\emph#1{{\it #1 }}

\def\ga{\gamma}

\def\cf{{\it cf}}

\def\rank{{\text{\rm rank }}}

\def\supp{{\text{\rm supp}}}
\def\rad{{\text{\rm rad}}}

\def\inn#1#2{\langle#1,#2\rangle}

\def\noi{\noindent}

\def\lc{\lesssim}
\def\gc{\gtrsim}


\def\eps{\varepsilon}

\def\ka{\kappa}
            
\def\la{\lambda}

\def\vphi{\varphi}
             
\def\om{\omega}              \def\Om{\Omega}


\def\fv{{\mathfrak {v}}}

\def\bbR{{\mathbb {R}}}

\def\cB{{\mathcal {B}}}
\def\cC{{\mathcal {C}}}

\def\cF{{\mathcal {F}}}

\def\cL{{\mathcal {L}}}

\def\cR{{\mathcal {R}}}
\def\cS{{\mathcal {S}}}
\def\cT{{\mathcal {T}}}

\def\cV{{\mathcal {V}}}
\def\cW{{\mathcal {W}}}




 at 10 true pt

\def\be#1{\begin{equation}\label{ #1}}
\def\endeq{\end{equation}}
\def\endal{\end{align}}
\def\bas{\begin{align*}}
\def\eas{\end{align*}}
\def\bi{\begin{itemize}}
\def\ei{\end{itemize}}

\def\eps{\varepsilon}

\def\emph#1{{\it #1}}
\def\textbf#1{{\bf #1}}

\parskip = 1 pt

\theoremstyle{plain}
   \newtheorem{theorem}{Theorem}[section]

   \newtheorem{proposition}[theorem]{Proposition}

   \newtheorem{corollary}[theorem]{Corollary}

   \newtheorem{theorem*}{Theorem}

\theoremstyle{remark}
   \newtheorem{remark}[theorem]{Remark}

\theoremstyle{definition}

\numberwithin{equation}{section}

\begin{document}

\title{Extensions of the Stein-Tomas theorem}

\author[J. Bak \ \ \ A. Seeger]{Jong-Guk Bak  \ \ \  \ \ Andreas Seeger}

\address {J. Bak \\ Department of Mathematics\\ Pohang University of Science and Technology
\\
Pohang 790-784, Korea}
\email{bak@postech.ac.kr}

\address{A. Seeger\\
Department of Mathematics\\ University of Wisconsin-Madison\\Madison, WI 53706, USA}
\email{seeger@math.wisc.edu}

\subjclass{42B15, 42B99}

\begin{thanks} {J.B. supported in part by Priority Research Centers Program through the National Research Foundation of Korea funded by the Ministry of
Education, Science and Technology (2009-0094068) and a Postech-BSRI grant.
A.S. supported in part by National Science Foundation grant 0652890.}
\end{thanks}

\begin{date}{April 27, 2010.} \end{date}
\begin{abstract}
We prove an
endpoint version of  the Stein-Tomas restriction theorem, for a  general
class of  measures, and   with a strengthened  Lorentz
space estimate.
A similar  improvement is obtained
for Stein's estimate  on  oscillatory integrals of
Carleson-Sj\"olin-H\"ormander type and some spectral projection operators on compact manifolds, and
 for classes of oscillatory integral
operators with one-sided  fold singularities.
\end{abstract}
\maketitle
\section{Introduction and statement of results}
\label{intro}

{\bf Fourier restriction.} Our first result concerns an endpoint  version of the $L^2$
Stein-Tomas Fourier
restriction theorem (\cite{tomas1}, \cite{tomas-stein}, \cite{stein}),
in the following general setup as formulated by Mockenhaupt \cite{mock}, and also by
Mitsis \cite{mitsis}.

Let $0<a<d$, $0<b\le a/2$, and consider a
 probability
measure  $\mu$ on $\bbR^d$.
We assume that, for positive finite constants $A\ge 1$, $B\ge 1$,
  $\mu$  satisfies
\begin{equation}\label{dimhyp}
\sup_{\rad(\cB)\le 1} \frac{ \mu(\cB)} {\rad(\cB)^a} \le A
\end{equation}
where the supremum  is taken over all balls $\cB$ with radius $\le 1$
 \begin{equation} \label{Fmuhyp}
\sup_{|\xi|\ge 1} |\xi|^b |\widehat \mu(\xi)|\le B.
\end{equation}
The number  $\inf \{a:\text{\eqref{dimhyp} holds for some $A<\infty$}\}$ is often referred to as  the
\lq  dimension\rq  \,of $\mu$ and the number
$\inf \{2b: \text{\eqref{Fmuhyp} holds for some $B<\infty$}\}$ is the
\lq Fourier dimension\rq \, of $\mu$.

The Stein-Tomas theorem (originally for surface measure on the sphere)
is concerned with  $L^p(\bbR^d, dx )\to L^2(d\mu)$ estimates for the
Fourier transform. Stein, in the 1960's, proved  that  such estimates
hold for some  $p>1$ if \eqref{Fmuhyp} holds for
some $b>0$. Tomas \cite{tomas1} improved  Stein's estimate
and obtained an almost sharp range. His proof was used  in
\cite{mock}, \cite{mitsis},
to show that, given \eqref{dimhyp} and \eqref{Fmuhyp},
\Be\label{mockest} \cF: L^p (dx)\to L^2(d\mu), \qquad  1\le p< p_\circ(a,b):=
\frac{2(d-a+b)}{2(d-a)+b}\,.
\Ee
 For surface measure on
 hypersurfaces with nonvanishing curvature one has  $a=2b=d-1$,
which gives the familiar parameter $p_\circ = \frac{2(d+1)}{d+3}$.
The  article  \cite{mock} was primarily  concerned with measures
 on Salem sets, i.e.
singular measures supported on $a$-dimensional subsets  of the real line
which satisfy \eqref{Fmuhyp}
 for  $b <a/2$ (with the parameter  $B$ depending on $b$).

Stein (\cf. \cite{tomas-stein}) proved an endpoint $L^p\to L^2(d\mu)$
estimate for the
surface measure on a sphere,
using interpolation with an analytic family of kernels.
As shown by Greenleaf \cite{greenleaf} this approach
can also be used  when  $\mu$ is surface measure on an imbedded
submanifold of $\bbR^d$, in order
 to get the endpoint bound for $p=p_\circ(a,b)$.
However it is not clear how to  extend the analytic  interpolation argument (and neither the alternative  interpolation argument in \cite{gv},
\cite{MSS})
to the general class  of measures
satisfying \eqref{dimhyp}, \eqref{Fmuhyp}.
 Here we establish the
endpoint version  of \eqref{mockest}
   and further strengthen  it by
 replacing   $L^{p_\circ}$
 with  the larger and generally  optimal  Lorentz space
$L^{p_\circ,2}$.
\begin{theorem}\label{lor}
Let $0<a<d$, $0<b\le a/2$,  and let $\mu $ be a probability
 measure satisfying  \eqref{dimhyp},
\eqref{Fmuhyp} with constants $A\ge 1$, $B\ge 1$.
Let  $p_\circ=\frac{2(d-a+b)}{2(d-a)+b}$.
 Then
\Be \label{lorestimate}
\int |\widehat f|^2 d\mu \le \,C^2
A^{\frac{b}{d-a+b}}  B^{\frac{d-a}{d-a+b}}\,
\|f\|_{L^{p_\circ,2}(\bbR^d)}^2\,.
\Ee
If $a,b$  are chosen from a compact interval $I\subset (0,\infty)$
then the constant  $C$ depends only on $d$ and $I$.
\end{theorem}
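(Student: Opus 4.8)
The plan is to run the classical Stein–Tomas $TT^*$ argument but in a form adapted to real interpolation with Lorentz-space endpoints and keeping careful track of the dependence on $A$ and $B$. Writing $\mathcal F f = \widehat f$, the estimate $\int |\widehat f|^2\,d\mu \le M^2 \|f\|^2$ is equivalent, by duality, to the boundedness of the operator $S = \mathcal F^* \circ \mathcal F$ acting as convolution with $\widehat{\mu}$ (more precisely, $\langle Sf, f\rangle = \int |\widehat f|^2 d\mu$, and $S$ has kernel $\widehat\mu(x-y)$ after the usual normalization). So it suffices to prove $\|\widehat{\mu} * f\|_{L^{p_\circ',2}} \lesssim M^2 \|f\|_{L^{p_\circ,2}}$ with $M^2 = C^2 A^{b/(d-a+b)} B^{(d-a)/(d-a+b)}$. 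First I would perform a Littlewood–Paley decomposition $\widehat\mu = \sum_{j\ge 0} K_j$, where $K_0$ is the low-frequency piece and $K_j = \widehat\mu \cdot \psi(2^{-j}\cdot)$ is supported on $|\xi|\sim 2^j$ for $j\ge 1$. The hypothesis \eqref{Fmuhyp} gives $\|K_j\|_\infty \lesssim B 2^{-jb}$, hence the convolution operator $T_j = K_j * (\cdot)$ maps $L^1 \to L^\infty$ with norm $\lesssim B 2^{-jb}$. On the other hand, \eqref{dimhyp} (the $a$-dimensionality of $\mu$) controls the $L^2\to L^2$ norm of $T_j$: one shows $\|\widehat{K_j}\|_\infty = \|\mu * \check\psi_j\|_\infty \lesssim A 2^{-j(d-a)}$ (the measure of a ball of radius $2^{-j}$ is $\lesssim A 2^{-ja}$, and $\check\psi_j$ is an $L^1$-normalized bump at scale $2^{-j}$ times a factor $2^{jd}$... more carefully, $\mu * \check\psi_j(x) = \int \check\psi_j(x-y)d\mu(y)$ and $|\check\psi_j| \lesssim 2^{jd}(1+2^j|\cdot|)^{-N}$, so summing over the $a$-dimensional mass gives $\lesssim 2^{jd} \cdot A 2^{-ja} = A 2^{j(d-a)}$ — wait, that is the wrong sign, so in fact $\|T_j\|_{2\to 2} = \|\widehat{K_j}\|_\infty \lesssim A 2^{j(d-a)}$). [The correct bookkeeping: low frequencies of $\mu$ are bounded, and the $a$-regularity says mass doesn't concentrate, giving $\|T_j\|_{2\to2}\lesssim \min(1, A2^{j(d-a)})$; but the operative bound combining both hypotheses is what matters below.]

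Interpolating the two estimates $\|T_j\|_{1\to\infty}\lesssim B2^{-jb}$ and $\|T_j\|_{2\to 2}\lesssim A 2^{j(d-a)}$ by the Riesz–Thorin theorem gives, for each $p$ with $\frac1p - \frac1{p'} = \theta$ (i.e. $\frac1p = \frac{1+\theta}2$), a bound $\|T_j\|_{p\to p'} \lesssim (B2^{-jb})^\theta (A 2^{j(d-a)})^{1-\theta}$. At the critical exponent $p_\circ$ the two exponential factors in $j$ should exactly balance — this is precisely the arithmetic that picks out $p_\circ = \frac{2(d-a+b)}{2(d-a)+b}$ — so that $\|T_j\|_{p_\circ\to p_\circ'} \lesssim A^{1-\theta_\circ} B^{\theta_\circ}$ uniformly in $j$, with $\theta_\circ$ chosen so that $\theta_\circ b = (1-\theta_\circ)(d-a)$, giving $1-\theta_\circ = \frac{b}{d-a+b}$... let me recompute: we want the $j$-dependence $-\theta b \cdot ? $; actually balancing requires the total power of $2^j$ to be $0$, but since we will sum a geometric series we want it strictly negative on ONE side of $p_\circ$. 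The standard device: for $p$ slightly below $p_\circ$ the sum $\sum_j \|T_j\|_{p\to p'}$ converges (geometric decay), giving $\mathcal F: L^p \to L^2(d\mu)$ with a norm that blows up like $(p_\circ - p)^{-1}$ as $p \uparrow p_\circ$; symmetrically, a reversed estimate holds for $p$ slightly above, but there one does not sum — instead one uses that $\|T_j\|_{p\to p'}$ is summable on the high-$j$ end. Concretely, set $T = \sum_j T_j$ and observe $T_j : L^{p_\circ} \to L^{p_\circ'}$ with norm $\lesssim 2^{j\epsilon_0}$ going one way and $\lesssim 2^{-j\epsilon_0}$... no. The cleanest route is: $T_j$ maps $L^{p_1} \to L^{p_1'}$ with geometrically decaying norms $2^{-j\delta}$ for some $p_1 < p_\circ$, and $L^{p_2}\to L^{p_2'}$ with norms $2^{+j\delta'}$ is useless — so instead I would use the two-sided bound directly at $p_\circ$ with the $j$-summation replaced by the real-interpolation space estimate of Bourgain/Carbery-Seeger-Wainger-Wright type.

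Here is the step that does the real work, and the main obstacle: passing from the family $\{T_j\}$ to a bound for $T = \sum T_j$ on the Lorentz space $L^{p_\circ,2}$. The point is that although each $T_j$ is bounded $L^{p_\circ}\to L^{p_\circ'}$ uniformly, the sum need not be — one loses a logarithm — and to recover the endpoint one exploits that $T_j$ gains (respectively loses) a small power of $2^j$ when the exponent is perturbed off $p_\circ$. The mechanism I expect the paper to use is Bourgain's real-interpolation trick: one has $\|T_j\|_{L^{p}\to L^{p'}}\lesssim A^{?}B^{?}2^{j\gamma(p)}$ where $\gamma(p_\circ)=0$ and $\gamma$ is monotone, with $\gamma'(p_\circ)\neq 0$; then for $f \in L^{p_\circ,2}$ one decomposes $f = \sum_k f_k$ into pieces of comparable size (or uses the atomic/real-interpolation characterization of $L^{p_\circ,2}$) and pairs the $k$-th piece of $f$ with a suitable range of $j$'s, summing the resulting geometric series in the two-parameter $(j,k)$ sum. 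The bookkeeping that $\gamma(p)$ vanishes to first order at $p_\circ$ with the right sign, together with the $\ell^2$ (rather than $\ell^\infty$) structure of $L^{p_\circ,2}$, is exactly what closes the sum without a logarithmic loss; this is the heart of the matter and the place where the Lorentz refinement (as opposed to the plain $L^{p_\circ}$ bound) is both used and obtained. Finally, the stated homogeneity of the constant, $A^{b/(d-a+b)}B^{(d-a)/(d-a+b)}$, is not proved separately: it follows automatically by tracking $A$ and $B$ through the Riesz–Thorin interpolation above (they enter as $A^{1-\theta_\circ}B^{\theta_\circ}$ with $\theta_\circ = \frac{d-a}{d-a+b}$... checking: balancing $\theta b = (1-\theta)(d-a)$ gives $\theta = \frac{d-a}{d-a+b}$, so $A^{1-\theta}B^{\theta} = A^{b/(d-a+b)}B^{(d-a)/(d-a+b)}$, as claimed), and the real-interpolation step is scale-invariant in $(A,B)$ so introduces no further dependence. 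The uniformity of $C$ over $a,b$ in a compact interval is clear since all implied constants depend continuously on $(a,b)$ and the exponent $p_\circ(a,b)$ stays in a compact subinterval of $(1,2)$.
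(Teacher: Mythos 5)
Your setup---the $TT^*$ reduction, the dyadic decomposition of $\widehat\mu$, the two bounds $\|T_j\|_{L^1\to L^\infty}\lesssim B2^{-jb}$ and $\|T_j\|_{L^2\to L^2}\lesssim A2^{j(d-a)}$, and the balancing exponent $\theta=\frac{d-a}{d-a+b}$ that produces the constant $A^{1-\theta}B^{\theta}$---all match the paper. The gap is precisely in the step you yourself flag as ``the heart of the matter.'' A single application of Bourgain's interpolation trick to these two families of bounds yields only the \emph{restricted weak type} estimate at the critical point, i.e.\ $T=\sum_j T_j\colon L^{p_\circ,1}\to L^{p_\circ',\infty}$, hence $\int|\widehat f|^2\,d\mu\lesssim A^{1-\theta}B^{\theta}\|f\|_{L^{p_\circ,1}}^2$; the abstract conclusion of that trick is $\overline{A}_{\vartheta,1}\to\overline{B}_{\vartheta,\infty}$ and nothing stronger. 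The ``two-parameter $(j,k)$ sum'' you gesture at cannot by itself upgrade the Lorentz indices from $(1,\infty)$ to $(2,2)$ at the single point $p_\circ$: you have no input that distinguishes how $T_j$ acts on pieces of $f$ of different heights beyond the same two endpoint bounds, so the bookkeeping lands you back at restricted weak type. This is exactly the point the paper makes when it notes that its intermediate estimate \eqref{tomasweak} ``is weaker than \eqref{tomas}.''

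The paper closes this gap by a bootstrap that manufactures \emph{off-diagonal} estimates. It first derives the restricted weak type $(p_\circ,p_\circ')$ bound just as you do; then, feeding that bound back through the Tomas pairing and Plancherel (using that $\|fe^{2\pi i\langle\cdot,\xi\rangle}\|_{L^{p_\circ,1}}=\|f\|_{L^{p_\circ,1}}$), it obtains the improved piecewise bound $\|f*\widehat\mu_j\|_{2}\lesssim A^{1-\theta/2}B^{\theta/2}2^{j(d-a)/2}\|f\|_{L^{p_\circ,1}}$. Bourgain's trick is then applied a \emph{second} time, to this $L^{p_\circ,1}\to L^2$ family against the $L^1\to L^\infty$ family, producing a restricted weak type $(\rho,\sigma)$ bound with $(1/\rho,1/\sigma)$ strictly off the dual line, and by duality a $(\sigma',\rho')$ bound. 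Since $(1/p_\circ,1/p_\circ')$ is the midpoint of the segment joining $(1/\rho,1/\sigma)$ and $(1/\sigma',1/\rho')$, real interpolation between these two off-diagonal restricted weak type estimates yields the strong $L^{p_\circ,2}\to L^{p_\circ',2}$ bound, which is what the $TT^*$ argument needs. Without producing two distinct points on the interpolation segment, your argument cannot reach the second Lorentz index $2$; that is the missing idea.
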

The proof of  Theorem \ref{lor} is given in  \S\ref{restrsect}.

\medskip



\noi {\it Remarks.}
(i)
By interpolation with the trivial $L^1\to L^\infty$ bound
we see that \eqref{lorestimate} implies
\Be \label{lorestimateq}
\Big(\int |\widehat f|^q d\mu\Big)^{1/q} \le \,C^{\frac{2}{q}}
A^{\frac{b}{(d-a+b)q}}  B^{\frac{d-a}{(d-a+b)q}}\,
\|f\|_{L^{p}(\bbR^d)}\,
\Ee
for $1\le p\le p_\circ(a,b)$, $q=\frac{b}{d-a+b} p'$. The dependence of the constant on $A$ and $B$ for $p< p_\circ(a,b)$
 has been  relevant in the work by \L aba and Pramanik \cite{lp}.

  By real interpolation Theorem \ref{lor} also  implies $\cF:L^{p,s}(dx)\to
L^{q}(d\mu)$ for
$1<p<p_\circ(a,b)$, $q\le \frac{b}{d-a+b}\, p'$, $s\le q$.
Here $p'=\frac{p}{p-1}$, the conjugate exponent.
In some  instances (e.g. \cite{z}, \cite{H},
 \cite{tvv})
the $L^p\to L^q$ result  for the
critical  $q= \frac{b}{d-a+b}\, p'$ is  known  even for some $p>p_\circ(a,b)$ and in such cases
 the Lorentz improvement of Theorem \ref{lor} for $q=2$ is of course trivial
 by interpolation.

(ii) Our estimates follow from off-diagonal $L^p\to L^q$ bounds for the
convolution operator with kernel $\widehat \mu$. These are known for the surface measure on  spheres, in particular   for  this example the
restricted weak type estimate in Proposition \ref{offdiag} below
is a special case of S. Guti\'errez' result \cite{gu}
on Bochner-Riesz operators with negative index.
Related  off-diagonal estimates are also featured
in \cite{bmo} where complex interpolation is used (and which contains also
several earlier references),  and,
more recently,   in the article
 \cite{kt} by Keel and Tao
 where real interpolation for bilinear operators is used to obtain
endpoint
$L^2_t(L^{p,2}_x)$ Strichartz estimates
(with the Lorentz norms in the slices).

\medskip

\noi{\it Sharpness of the Lorentz exponent.}  We consider the case of surface measure
on  the sphere and show
that for this example the Lorentz exponent in Theorem \ref{lor} is optimal.
Indeed we show that
$\cF$ does not map $L^{p,s}\to L^q$ for   $q= \frac{d-1}{d+1} p'$ and
$s>q$.
This
is seen by a superposition of  standard Knapp examples at different
scales.
Namely  let
$$g(\xi',\xi_d)= \sum_{k=1}^N 2^{k(d-1)/q}\eta_1(2^k |\xi'|)
\eta_0(2^{2k-5} (|\xi_d-1|))$$ where $\eta_1$, $\eta_0$ are
suitable bump functions on  $(\frac 34, \frac 54)$ and  $(-1,1)$,
respectively. Then
$(\int_{S^{d-1}}|g|^q d\sigma)^{1/q} \approx N^{1/q}$.
Also  $f=\cF^{-1}[g]$ is bounded  and the measure of the set
 $\{x:|f(x)|>2^{-j}\}$ is bounded by
$C 2^{k_j(d+1)}$ where $(d+1-\frac{d-1}q)k_j \in[j-c,j+c]$. Hence  if $k_j<N$
the measure of this set   is bounded  by
$C' 2^{j\frac{(d+1)q}{(d+1)q-(d-1)}}=C' 2^{jp}$
and if $k_j\ge N$ it is  $\lc 2^{Np}$.
Thus the $L^{p, s}$ norm  of
$f$
is $O(N^{1/s})$ and if
the
Fourier restriction operator  maps
 $L^{p,s}$ to $ L^q(d\mu)$ then
$s\le q$.

\medskip

\noi{\bf Operators of
 Carleson-Sj\"olin-H\"ormander type}. We consider oscillatory integral
operators $T_\la$
given by
\Be\label{Tladef}T_\la f(x) = \int \zeta(x,y) e^{i\la\varphi(x,y)} f(y) \, dy;\Ee
here  $\la> 1$, $\zeta\in C^\infty_c(\Omega_L\times\Omega_R)$ where $\Omega_L$ is an
open set in $\bbR^{d}$ and  $\Omega_R$ is an open set in $\Bbb
R^{d-1}$. The phase is real-valued and smooth on $\Om:=\Om_L\times \Om_R$
and the following conditions are assumed.

First, the mixed Hessian $\varphi_{xy}''$ has maximal rank
\Be\label{rank}  \text{rank}\, \varphi_{xy}''=d-1\Ee on
$\Om$.
This implies that for every $x\in \Om_L$ the  variety
\Be\label{Sigmax} \Sigma_x:=\{\varphi_x'(x,y): y\in \Om_R\}
\Ee is an immersed hypersurface in
$(\bbR^d)^*$.
The second hypothesis is then that for every $x\in \Omega_L$ the hypersurface
$\Sigma_x$ has nonvanishing Gaussian
  curvature everywhere. Analytically this means that
for any unit vector $u=(u_1,\dots, u_d)$
 we have the condition
\Be\label{CS}
u^{\text t}\varphi_{x y}=0 \,\implies\,
\det \Big( \nabla^2_{yy} (u^\text{t} \varphi_{x})\Big)\neq 0 \, ,
\Ee
for all points in $\Om$.

In \cite{H} H\"ormander raised the question  whether
conditions \eqref{rank}, \eqref{CS} imply
\Be\label{Tlaest}
\|T_\la\|_{L^p(\bbR^{d-1})\to  L^q(\bbR^d)} \lc \la^{-d/q},\quad
q=\frac{d+1}{d-1}p',
\Ee
for  $1<p<\frac{2d}{d-1}$. As he pointed out a limiting argument yields the analogous estimate
for the adjoint of the Fourier restriction operator; the relevant phase function is  $\varphi(x,y)=\inn{x}{\Gamma(y)}$ where $\Gamma$ parametrizes a hypersurface with nonvanishing curvature.
The optimal result in two dimensions was  proved  in \cite{H}
following earlier results by Fefferman and Stein \cite{feff} and by
Carleson and Sj\"olin \cite{casj}.
 Bourgain \cite{bo2} showed that in dimension $d\ge 3$
 there are classes of  phase-functions satisfying  \eqref{rank}, \eqref{CS}
for which
\eqref{Tlaest}  fails for any $p>2$.
Earlier,  Stein \cite{stein} had established \eqref{Tlaest} in the range
 $1\le p\le 2$. Here we are concerned with a Lorentz space
strengthening
of \eqref{Tlaest} for the endpoint $p=2$ of Stein's result, with  $L^q$ replaced by  $L^{q,2}$.

Following \cite{MSS} we slightly generalize the setup of  Stein's theorem
and relax the curvature assumptions on the
 manifolds $\Sigma_x$ in \eqref{Sigmax}, namely, we assume
that for every point on $\Sigma_x$ at least $\ka$ principal curvatures
do not vanish.
This is equivalent to
\Be\label{CSgen}
u^{\text t}\varphi_{x y}=0 \,\implies\,
\rank \Big( \nabla^2_{yy} (u^\text{t} \varphi_{x})\Big)\ge \kappa \, ,
\Ee
for all points in $\Om$,  and all unit vectors $u$. The case $\kappa=d-1$ corresponds to the setup described above and
the case $\kappa= d-2$ occurs in problems with
conical structures.

\begin{theorem}\label{oscthm}
Let
$T_\la$ be as in \eqref{Tladef}, with $\varphi$ satisfying
\eqref{rank}, \eqref{CSgen}. Let
$q_\circ=2+4\kappa^{-1}$. Then
$$\|T_\la\|_{L^2(\bbR^{d-1})\to L^{q_\circ,2}(\bbR^d)}\lc
\la^{-d/q_\circ}.
$$
\end{theorem}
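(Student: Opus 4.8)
\emph{Strategy.} For the model phase $\varphi(x,y)=\langle x,\Gamma(y)\rangle$ the estimate would follow from the dual (extension) form of Theorem \ref{lor} by a limiting argument; for general phases satisfying \eqref{rank}, \eqref{CSgen} the plan is to run a $TT^*$ argument directly in the Lorentz scale, decompose the kernel of $S_\la:=T_\la T_\la^*$ dyadically according to the size of $|x-x'|$, and sum the resulting pieces by a real interpolation argument at $q_\circ$ — which turns out to be critical, in the sense that naive summation loses only a logarithm, exactly the loss that the passage from $L^{q_\circ}$ to $L^{q_\circ,2}$ removes. For the $TT^*$ step, since $L^2(\bbR^{d-1})$ is a Hilbert space and, as $2<q_\circ<\infty$, $L^{q_\circ,2}(\bbR^d)$ is reflexive with dual $L^{q_\circ',2}(\bbR^d)$, a routine computation gives
\[
\|T_\la\|_{L^2\to L^{q_\circ,2}}^2\;\le\;\|S_\la\|_{L^{q_\circ',2}(\bbR^d)\to L^{q_\circ,2}(\bbR^d)},
\]
with $S_\la$ of kernel $K_\la(x,x')=\int\zeta(x,y)\,\overline{\zeta(x',y)}\,e^{i\la(\varphi(x,y)-\varphi(x',y))}\,dy$. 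After a finite partition of unity I may assume $\supp\zeta$ small, so $|x-x'|\lesssim1$ on the support of $K_\la$, and the goal becomes $\|S_\la\|_{L^{q_\circ',2}\to L^{q_\circ,2}}\lesssim\la^{-2d/q_\circ}$.

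\emph{Dyadic pieces and endpoint bounds.} I would write $S_\la=\sum_{0\le j\lesssim\log\la}S_\la^j$, where the kernel of $S_\la^j$ is $K_\la$ cut off by a smooth bump to $|x-x'|\sim2^j\la^{-1}$ (to $|x-x'|\lesssim\la^{-1}$ when $j=0$); the sum is finite since $|x-x'|\lesssim1$ on the support. Two bounds per piece are needed. First, the pointwise bound $|K_\la(x,x')|\lesssim(1+\la|x-x'|)^{-\ka/2}$ for $|x-x'|\lesssim1$, obtained from stationary phase in $y$: writing $v=x-x'$, where the phase $\varphi(x,\cdot)-\varphi(x',\cdot)$ has a critical point its $y$-Hessian has size $\sim|v|$ and rank $\ge\ka$ by \eqref{CSgen}, and away from such points the phase is non-stationary by \eqref{rank}; hence
\[
\|S_\la^j\|_{L^1(\bbR^d)\to L^\infty(\bbR^d)}\;\lesssim\;2^{-j\ka/2}.
\]
Second, an $L^2$ bound: localizing $x$ and $x'$ to a finitely overlapping family of balls of radius $\sim2^j\la^{-1}$ presents $S_\la^j$ as a block-diagonal sum of operators $(\psi_\nu T_\la)(\psi_\mu T_\la)^*$ with $\nu,\mu$ adjacent; since by \eqref{rank} the $x$-phase of $(\psi_\nu T_\la)^*(\psi_\nu T_\la)$ is non-stationary unless $|y-y'|\lesssim2^{-j}$, Schur's test gives $\|\psi_\nu T_\la\|_{L^2\to L^2}^2\lesssim2^j\la^{-d}$ and hence
\[
\|S_\la^j\|_{L^2(\bbR^d)\to L^2(\bbR^d)}\;\lesssim\;2^j\la^{-d}
\]
(consistently, $\sum_{j\lesssim\log\la}2^j\la^{-d}\sim\la^{-(d-1)}$ reproduces Stein's $L^2\to L^2$ estimate).

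\emph{Critical summation.} Interpolating the two displayed estimates with $\theta=2/q_\circ$ gives
\[
\|S_\la^j\|_{L^{q_\circ'}\to L^{q_\circ}}\;\lesssim\;\bigl(2^{-j\ka/2}\bigr)^{1-\theta}\bigl(2^j\la^{-d}\bigr)^{\theta}\;=\;\la^{-2d/q_\circ},
\]
the powers of $2^j$ cancelling exactly because $(\ka/2)(1-\theta)=\theta$, i.e.\ because $q_\circ=2+4\ka^{-1}$ is critical; so summing the $\sim\log\la$ scales in $L^{q_\circ}$ would cost a logarithm. To recover the endpoint I would appeal instead to the real-interpolation/summation device behind Proposition \ref{offdiag} (the same mechanism as in the proof of Theorem \ref{lor}): from a family whose $L^1\to L^\infty$ norms decay geometrically like $2^{-j\ka/2}$ and whose $L^2\to L^2$ norms grow geometrically like $2^j\la^{-d}$, it yields $\bigl\|\sum_j S_\la^j\bigr\|_{L^{q_\circ',2}\to L^{q_\circ,2}}\lesssim\la^{-2d/q_\circ}$. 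With the $TT^*$ reduction this is the theorem.

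\emph{Main difficulty.} The crux, and the new point beyond Stein's argument, is running the summation of the dyadic pieces in the Lorentz rather than Lebesgue scale at the critical exponent $q_\circ$: one must verify that the geometric decay of the $L^1\to L^\infty$ bounds and the geometric growth of the $L^2\to L^2$ bounds are exactly balanced there, and feed this into the abstract interpolation machinery so that it outputs $L^{q_\circ,2}$ and not merely $L^{q_\circ}$. A secondary technical point is the uniform-in-$(x,x')$ stationary-phase estimate for $K_\la$ under the weaker hypothesis \eqref{CSgen}, where the critical set in $y$ need not reduce to a single nondegenerate point.
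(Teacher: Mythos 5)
Your reduction to $\|S_\la\|_{L^{q_\circ',2}\to L^{q_\circ,2}}\lesssim\la^{-2d/q_\circ}$, the dyadic decomposition in $|x-x'|$ (the paper decomposes in $|x_d-x_d'|$ and splits off the non-stationary region $|x'-\tilde x'|\gtrsim\eps|x_d-\tilde x_d|$, but this is essentially equivalent), and the two per-piece bounds $\|S^j_\la\|_{L^1\to L^\infty}\lesssim 2^{-j\ka/2}$, $\|S^j_\la\|_{L^2\to L^2}\lesssim 2^j\la^{-d}$ are all in line with the paper. The gap is in the final step, which you have correctly identified as the crux but not actually carried out: Bourgain's interpolation device, fed the two families of bounds you have (one decaying geometrically on $L^1\to L^\infty$, one growing geometrically on $L^2\to L^2$), outputs only the \emph{restricted weak type} estimate $\bigl\|\sum_j S^j_\la\bigr\|_{L^{q_\circ',1}\to L^{q_\circ,\infty}}\lesssim\la^{-2d/q_\circ}$ --- it lands in $\overline B_{\vth,\infty}=L^{q_\circ,\infty}$, not $L^{q_\circ,2}$, and a single restricted weak type bound at the critical pair cannot be upgraded to $L^{q_\circ',2}\to L^{q_\circ,2}$ by interpolation with anything you have on hand. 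This is exactly the issue already present in the model case of Theorem \ref{lor}: there Proposition \ref{offdiag} does not stop at the restricted weak type $(p_\circ,p_\circ')$ bound, but uses it (via Tomas' argument) to derive an improved $L^{p_\circ,1}\to L^2$ bound for each dyadic piece, runs Bourgain's trick a second time against $L^1\to L^\infty$ to obtain restricted weak type bounds at two \emph{off-diagonal} points $(\rho,\sigma)$ and $(\sigma',\rho')$ straddling the diagonal-dual pair, and only then gets $L^{p_\circ,2}\to L^{p_\circ',2}$ by real interpolation between those two.

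The analogue here is the actual bulk of the paper's proof and is absent from your outline: one must prove, for each piece, the off-diagonal estimates
\[
\big\|S^\la_j\big\|_{L^{q_\circ',1}\to L^2}+\big\|S^\la_j\big\|_{L^2\to L^{q_\circ,\infty}}\lesssim 2^{j/2}\la^{-d(\frac1{q_\circ}+\frac12)},
\]
which strictly improve on what interpolation of your two displayed bounds gives. Their proof requires rescaling $S^\la_j$ to unit scale (an operator $\cR_\mu$ with $\mu=2^j$), running a \emph{second} $TT^*$ argument on $\cR_\mu$, reducing the number of frequency variables by stationary phase, performing a second dyadic decomposition of the kernel of $\cR_\mu\cR_\mu^*$, and applying Bourgain's trick again at that level together with a canonical-graph $L^2$ estimate (Lemma 2.3 of \cite{grse}). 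Once these off-diagonal bounds are in hand, interpolating each against the $L^1\to L^\infty$ bound yields restricted weak type bounds for $\sum_j S^j_\la$ at $(\rho,\sigma)$ and $(\sigma',\rho')$ with $\rho,\sigma$ as in \eqref{pone} for $(a,b)=(d-1,\ka/2)$, and real interpolation between these two gives the $L^{q_\circ',2}\to L^{q_\circ,2}$ bound at their midpoint. Without this extra layer your argument proves only the restricted weak type version of the theorem.
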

We give the proof of   Theorem \ref{oscthm}
in \S\ref{oscintsect}.

\medskip

\noi{\bf Spectral projection operators on compact manifolds.}
As an application we mention a slight  improvement
of the $L^2\to
L^q$  endpoint bounds for
spectral projection operators associated to  the Laplace-Beltrami
operator
on
general  compact Riemannian
manifolds, due to Sogge \cite{sogge2}. See also
\cite{seso} for  a result covering first order pseudo-differential
operators and then  some higher order differential  operators.

Following the latter paper, and \cite{soggebook}, we consider a classical
elliptic pseudo-differential
operator of first order on a $d$-dimensional  compact manifold $M$ which is self-adjoint
with respect to some given density. We denote by $p(x,\xi)$ the
principal symbol, which is homogeneous of degree one with respect to
$\xi$, and only vanishes for $\xi=0$. Our hypothesis is that the co-spheres
$$\Sigma_x=\{\xi: p(x,\xi)=1\}$$
are convex, with non-vanishing Gaussian curvature everywhere (this
property is referred to as ``strict convexity'' in \cite{seso}). Of
course the main example is given by $P=\sqrt{-\Delta}$ where $\Delta$
is the  Laplace-Beltrami operator on $M$. Consider the
finite dimensional space of eigenfunctions of $P$ whose eigenvalues
belong to $[\la,\la+1]$, for $\la\gg1$, and  the self-adjoint
projection to this finite-dimensional subspace. We denote this projection operator
$\chi_\la(P)$ (where $\chi_\la$ is the characteristic function of
$[\la,\la+1]$).
By the results in \cite{sogge2}, \cite{seso} the $L^2(M)\to L^q(M)$
operator norm of $\chi_\la(P)$ is $O(\la^{d(1/2-1/q)-1/2})$ in the
  sharp range $q_\circ:=\frac{2d+2}{d-1}\le q\le \infty$; in
  particular one has the bound $O(\la^{1/q_\circ})$ for
  $q=q_\circ$. The argument in  \cite{seso} relies on the small time parametrix construction for solutions of the wave equation in \cite{Hsp}, and so
does the  treatment in ch.5 of \cite{soggebook}. In the latter the  $L^2\to L^q$ estimates for $\chi_\la(P)$
are directly reduced   to $L^2\to L^q$ inequalities for
oscillatory integral operators of
Carleson-Sj\"olin-H\"ormander type. Thus using  this approach Theorem \ref{oscthm} can be used to derive
the following  endpoint  result.

\begin{corollary}\label{spectrthm} For $\la\ge 1$, $q_\circ= \frac{2d+2}{d-1}$, the operators
$\la^{-1/q_\circ}\chi_\la(P)$  map $L^2(M)$ to $L^{q_\circ,2}(M)$ and $L^{q_\circ',2}(M)$ to $L^2(M)$, with operator norms uniform in $\la$.
\end{corollary}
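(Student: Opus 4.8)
The plan is to deduce Corollary~\ref{spectrthm} by inserting the Lorentz-improved oscillatory integral estimate of Theorem~\ref{oscthm}, specialized to $\ka=d-1$, into the reduction of spectral cluster bounds to operators of Carleson--Sj\"olin--H\"ormander type carried out in Chapter~5 of \cite{soggebook}. No oscillatory integral input beyond Theorem~\ref{oscthm} will be needed.

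First I would recall the structure of that reduction. There one writes $\chi_\la(P)$, modulo a term with $L^2(M)\to L^\infty(M)$ operator norm $O(\la^{-N})$ for every $N$, as a finite sum of pieces which, after a partition of unity on $M$, a dyadic decomposition in the frequency variable, and the dilation $x\mapsto x/\la$, become operators of the form \eqref{Tladef} with phase $\varphi$ satisfying \eqref{rank}; the strict convexity of the co-spheres $\Sigma_x$ is precisely condition \eqref{CS}, that is, \eqref{CSgen} with $\ka=d-1$. Since $q_\circ=2+4(d-1)^{-1}=\frac{2d+2}{d-1}$ and $d(\tfrac12-\tfrac1{q_\circ})-\tfrac12=\tfrac1{q_\circ}$, the reduction deduces $\|\chi_\la(P)\|_{L^2(M)\to L^{q_\circ}(M)}\lc\la^{1/q_\circ}$ from $\|T_\la\|_{L^2\to L^{q_\circ}}\lc\la^{-d/q_\circ}$ using only finite sums, the dilation $x\mapsto x/\la$ --- under which the $L^{q_\circ,2}(\bbR^d)$ norm scales by the same power of $\la$ as the $L^{q_\circ}(\bbR^d)$ norm --- and the smoothing remainders, which on the compact manifold $M$ obey $L^2\to L^\infty$ bounds of the correct order and hence $L^2\to L^{q_\circ,2}$ bounds. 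Running the same reduction with the conclusion of Theorem~\ref{oscthm} in place of $\|T_\la\|_{L^2\to L^{q_\circ}}\lc\la^{-d/q_\circ}$ therefore gives
$$\|\chi_\la(P)\|_{L^2(M)\to L^{q_\circ,2}(M)}\lc\la^{1/q_\circ},\qquad \la\ge1,$$
with constant independent of $\la$, which is the first assertion after dividing by $\la^{1/q_\circ}$.

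The second mapping property follows by duality: $\chi_\la(P)$ is self-adjoint and $(L^{q_\circ,2})^\ast=L^{q_\circ',2}$ since $1<q_\circ<\infty$, so $\la^{-1/q_\circ}\chi_\la(P)$ also maps $L^{q_\circ',2}(M)$ to $L^2(M)$ with the same norm. For $\la$ in a bounded range both estimates are immediate from the spectral theorem and Sobolev embedding, so one may assume $\la$ large.

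The only genuine work is organizational: extracting from \cite{soggebook} exactly which estimates enter and checking that each survives the passage from $L^{q_\circ}$ to $L^{q_\circ,2}$. I expect the single point needing a little care is to isolate the contributions \emph{not} covered by Theorem~\ref{oscthm} --- the smoothing remainders of H\"ormander's small-time wave parametrix, and any low-frequency part --- and to confirm that they are of strictly lower order; the crude $L^2\to L^\infty$ bounds available for them on a compact manifold then suffice, because $L^\infty(M)\hookrightarrow L^{q_\circ,2}(M)$ when $M$ has finite measure.
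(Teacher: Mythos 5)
Your proposal is correct and follows exactly the route the paper intends: the paper gives no separate proof of Corollary~\ref{spectrthm} beyond the paragraph preceding it, which says precisely that the reduction in ch.~5 of \cite{soggebook} turns $L^2\to L^q$ bounds for $\chi_\la(P)$ into bounds for Carleson--Sj\"olin--H\"ormander operators, into which one substitutes Theorem~\ref{oscthm} with $\ka=d-1$. Your additional checkpoints (dilation-invariance of the Lorentz exponent, $L^\infty(M)\hookrightarrow L^{q_\circ,2}(M)$ for the remainders, and self-adjointness plus $(L^{q_\circ,2})^*=L^{q_\circ',2}$ for the dual estimate) are exactly the details the paper leaves implicit.
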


\medskip

\noi {\bf Operators with one-sided fold singularities.}
One can also prove   Lorentz-space improvements
of the endpoint $L^2(\bbR^d)\to L^q(\bbR^d)$ results
for oscillatory integral operator with one-sided fold singularities,
obtained by Greenleaf and the second author in \cite{grse}.
Here one considers the operator defined by
\Be\label{Tlaoscdef}
\cT_\la f(x) = \int \zeta(x,y) e^{i\la\Phi(x,y)} f(y) \, dy,\Ee
where  $\zeta\in C^\infty_c(\Omega_1\times\Omega_2)$ and now
$\Omega_1, \Omega_2\subset \bbR^d$.
The phase  $\Phi$ is  smooth and  real-valued in $\Omega_1\times \Omega_2$
and $T_\la$ now acts on functions of $d$ variables.
We assume that the map  $$\pi_L : (x,y)\mapsto (x,\Phi_x(x,y))$$ has only fold singularities in $\Omega_1\times\Omega_2$,
i.e.
\Be\label{foldhyp}
\begin{gathered}
\text{rank}\, \Phi_{xy}''\ge d-1
\\
\det (\Phi_{xy})(x,y) =0, \, \Phi_{xy}b=0,\, b\neq 0 \quad \implies
\inn{b}{\nabla_y} (\det\Phi_{xy}) \neq 0.
\end{gathered}
\Ee
For an integer $\kappa$, $0\le \ka\le d-1$ we say that
{\it Hypothesis $(\pi_L, \kappa)$} is satisfied
if the $(d-1)$-dimensional immersed
hypersurfaces
$$\cL_x=\{ \Phi_x(x,y): \det \Phi_{xy}=0 \}$$
have at least $\kappa$ nonvanishing principal curvatures at every point.
Notice that  the case $\kappa=0$  is included (and contains no
particular  assumption).

\begin{theorem}\label{oscfoldthm}
Let
$\cT_\la$ be as in \eqref{Tlaoscdef}, with $\Phi$ satisfying
\eqref{foldhyp} and also Hypothesis $(\pi_L, \kappa)$.
Let
$q_1=\frac{2\kappa+4}{\kappa+1}$. Then
$$\|\cT_\la\|_{L^2(\bbR^{d})\to L^{q_1,2}(\bbR^d)}\lc
\la^{-d/q_1}.
$$
\end{theorem}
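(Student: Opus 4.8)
The plan is to follow the same scheme used for Theorem \ref{oscthm}, adapting the reduction-to-normal-form arguments for one-sided folds from \cite{grse} and then quoting our Lorentz-space dyadic estimate in place of the classical $L^2\to L^q$ bound. First I would microlocalize and rescale. After decomposing $\cT_\la$ with a partition of unity, one may assume the support of $\zeta$ is a small neighborhood of a point $(x_0,y_0)$. When $\det\Phi_{xy}(x_0,y_0)\neq0$, the operator $\cT_\la$ is (after a change of variables straightening $\pi_L$) essentially a Carleson--Sj\"olin--H\"ormander operator in $d$ variables with a nondegenerate phase, and the Knapp-type examples show the critical exponent is $\frac{2d}{d-1}\le q_1$; here the desired bound is weaker than what is classically known, so no Lorentz refinement is needed. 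The essential case is near a fold point, $\det\Phi_{xy}(x_0,y_0)=0$.

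Near a fold point, the standard normal form (see \cite{grse}, following Melrose--Taylor and Pan--Sogge) lets one write, after smooth changes of variables in $x$ and $y$ and conjugation by a modulation, a phase of the form $\Phi(x,y)=\langle x',\psi(x,y)\rangle+\sigma(x,y)$ where $x=(x',x_d)$ and the $x_d$-dependence enters through a cubic-in-$y_d$ term realizing the fold; the key point is that for fixed $x_d$ the phase restricted to $\{y_d=\text{const}\}$ together with the fold geometry yields, via the van der Corput / oscillatory-integral-with-fold estimate, the gain $\la^{-1/2}$ in the $x_d$-direction beyond what one gets from the $(d-1)$-dimensional stationary phase in the remaining variables. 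Slicing in $x_d$ and $y_d$ and combining the one-dimensional fold estimate with the $(\kappa)$-curvature of the surfaces $\cL_x$ reduces matters to the same bilinear/dyadic inequality that underlies Theorem \ref{oscthm}: namely, after decomposing the kernel of $\cT_\la\cT_\la^*$ dyadically according to the size of $\la^{-1}|x-z|$, each dyadic piece $\cT_\la^j$ maps $L^2\to L^2$ with an appropriate norm, and the interpolation lemma producing the $L^{q_1,2}$ endpoint (Proposition-type statement used for Theorem \ref{oscthm}) applies verbatim. Concretely, one shows that the Schwartz kernel $K_\la(x,z)$ of $\cT_\la\cT_\la^*$ satisfies $|K_\la(x,z)|\lc \la^{-d}(1+\la|x-z|)^{-(\kappa+1)/2}$ off a lower-dimensional set, the exponent $(\kappa+1)/2$ coming from $1/2$ for the fold direction and $\kappa/2$ from the $\kappa$ nonvanishing curvatures of $\cL_x$; this is exactly the decay rate for which the dyadic summation gives $L^2\to L^{q_1,2}$ with $q_1=\frac{2\kappa+4}{\kappa+1}$, and the $\la^{-d/q_1}$ scaling drops out of the interpolation.

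The main obstacle I anticipate is the geometric bookkeeping in the fold normal form when $\kappa<d-1$: one must verify that the $\kappa$ nonvanishing principal curvatures of the immersed hypersurfaces $\cL_x=\{\Phi_x(x,y):\det\Phi_{xy}=0\}$ survive the changes of variables used to normalize the fold, i.e. that the curvature hypothesis $(\pi_L,\kappa)$ is stable under the reductions and interacts correctly (rather than destructively) with the fold direction. In the case $\kappa=d-1$ this is carried out in \cite{grse}; for general $\kappa$ one needs to track which directions the fold ``uses up'' and confirm that the curvature is measured transversally to that direction, so that the decay exponents genuinely add. A secondary technical point is handling the lower-dimensional exceptional set on which the kernel bound fails (the ``caustic'' set where $x-z$ is nearly tangent to $\Sigma_x$ or lies in a bad cone): one disposes of it by a further decomposition exactly as in the proof of Theorem \ref{oscthm}, absorbing it into an error term estimated by a cruder $L^2\to L^{q_1}$ bound that already follows from \cite{grse}, so that only the endpoint Lorentz refinement of the main term must be proved afresh.
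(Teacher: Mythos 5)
Your overall strategy --- normal form for the one-sided fold, a $\cT_\la\cT_\la^*$ argument, dyadic decomposition of the kernel according to the size of $\la|x-z|$, and the observation that the fold contributes an extra factor $2^{-j/2}$ on top of the $2^{-j\ka/2}$ coming from the $\ka$ curvatures of $\cL_x$ (so the $L^1\to L^\infty$ bound for the $j$-th piece is $2^{-j(\ka+1)/2}$) --- is the paper's, and the reduction to normal form together with the stability of Hypothesis $(\pi_L,\ka)$ is indeed imported from \cite{grse} exactly as you anticipate. The genuine gap is in the step where you assert that the kernel bound plus ``$L^2\to L^2$ with an appropriate norm'' for each dyadic piece lets the interpolation lemma ``apply verbatim.'' Summing an $L^1\to L^\infty$ bound $O(2^{-j(\ka+1)/2})$ against an $L^2\to L^2$ bound for the pieces of $\cT_\la\cT_\la^*$ via Bourgain's trick lands you precisely at the dual pair $(q_1',q_1)$ and yields only the restricted weak type estimate $L^{q_1',1}\to L^{q_1,\infty}$, i.e.\ $\cT_\la:L^2\to L^{q_1,\infty}$; it cannot produce the Lorentz second index $2$. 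That requires \emph{off-diagonal} restricted weak type bounds $L^{\rho_1,1}\to L^{\sigma_1,\infty}$ and $L^{\sigma_1',1}\to L^{\rho_1',\infty}$ whose real-interpolation midpoint is $(1/q_1',1/q_1)$, and to get those one needs, for each dyadic piece, the intermediate estimates
$$\|\cS^\la_j\|_{L^{q_1',1}\to L^2}+\|\cS^\la_j\|_{L^2\to L^{q_1,\infty}}\lc 2^{j/2}\la^{-d(1/q_1+1/2)}.$$
These are not a formal consequence of the kernel bound: in the paper they are proved by rescaling $\cS^\la_j$ to unit scale, running a second $T^*T$ argument with a stationary-phase reduction of the frequency variables and a second dyadic decomposition, and it is in the innermost $L^2\to L^2$ estimate for those sub-pieces (a canonical-graph bound with $d$ frequency variables, Lemma 2.3 of \cite{grse}) that the fold condition $\Phi_{x_dy_dy_d}\neq0$ is actually used to gain the extra half power. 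Your write-up never produces these off-diagonal inputs, so as it stands it proves only the weak-type endpoint.

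Two smaller points. The kernel bound you state, $|K_\la(x,z)|\lc\la^{-d}(1+\la|x-z|)^{-(\ka+1)/2}$, carries a spurious factor $\la^{-d}$: the kernel of $\cT_\la\cT_\la^*$ is $O(1)$ near the diagonal. And the cubic (Airy-type) fold normal form is more machinery than is needed; the argument only uses the second-order normalization $\Phi_{x'y'}(0,0)=I_{d-1}$, $\Phi_{x_dy}(0,0)=0$, $\Phi_{x_dy_dy_d}(0,0)\neq0$, $\Phi_{x_dy'y_d}(0,0)=0$ and $\rank\Phi_{x_dy'y'}\ge\ka$, which immediately give $\rank\Phi_{x_dyy}\ge\ka+1$ and hence the stationary-phase gain $(\ka+1)/2$, with no need to track how the curvature survives a full normal form for $\pi_L$.
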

The $L^2\to L^{q_1}$ bounds are in \cite{grse}. Given  the preparations in that work the proof of  Theorem
\ref{oscfoldthm} is
very similar  to the proof of  Theorem \ref{oscthm}. We sketch the
argument in  \S\ref{oscintfoldsect}.

\medskip

{\it Remarks.}
(i)  In two dimensions, under the stronger hypothesis of {\it two-sided}
fold singularities, together with the appropriate
curvature assumptions,  such estimates can be derived from the  sharp
$L^p \to L^q$ results for $q=3p'/2$ and  $q>5/2$, obtained   by
Bennett and the second author in \cite{BeSe}. The above mentioned example by
Bourgain suggests
that  higher dimensional   analogues of those estimates will not hold in
the full generality of our setup here.

(ii) From Theorem \ref{oscfoldthm} one can obtain
an $L^2_{\text{comp}}\to L^{q,2}_{\text{loc}}$ improvement
for
Fourier  integral operators with fold singularities
in  \cite{grse}, using arguments in that paper.
In particular this covers the  $L^2\to L^{3,2}$ and $L^{3/2,2}\to L^2$
estimates for translation invariant averages over curves in $\Bbb R^3$,
 with nonvanishing
curvature and torsion. The corresponding Lebesgue space estimates had been
already obtained by Oberlin \cite{ob} and his paper was the starting point for the  variable results
in \cite{grse}.
The   version of Theorem \ref{oscfoldthm}
for one-sided folds, in its adjoint formulation,
 also implies  the optimal
$L^{3/2,2}_{\text{comp}}(\bbR^3)\to L^{2}_{\text{loc}}(\bbR^3)$ estimate for
the restricted
X-ray transform associated to {\it well-curved} line complexes in
$\bbR^3$, see \cite{grse} for further discussion.

(iii) As observed in Appendix I  of  \cite{BeSe} the Lorentz-space improvement of the abovementioned
result by Oberlin can also be obtained by interpolation from better  $L^p\to L^q$
 bounds for $p>2$ for oscillatory integrals.
It is presently unknown whether  such better  results
hold for just one-sided folds and  suitable curvature assumptions, even in $\bbR^3$.

(iv) Theorem \ref{oscfoldthm} can be used to slightly improve
estimates for eigenfunctions of the  Laplace-Beltrami operator $\Delta$ on a
compact manifold $M$, when restricted to hypersurfaces,
see Burq, G\'erard, Tzvetkov \cite{bgt} and Hu \cite{hu}. Assume that
$e_\la$ is an eigenfunction  for $\Delta$ satisfying
$\Delta e_\la=-\la^2 e_\la$. It is proved in  \cite{hu} that  for
any hypersurface $S\subset M$, the quotient
$\|e_\la\|_{L^q(S)}/\|e_\la\|_{L^2(M)}$ is $O(\la^{(d-1)(1/2-1/q)})$
for $\frac{2d}{d-1}\le q\le \infty$. Note that
$(d-1)(1/2-1/q)=1/q$ for the endpoint $q=2d/(d-1)$. Hu's result  is based on an application of Theorem 2.2 in \cite{grse}.
The improved estimate
$$\|e_\la\|_{L^{q,2}(S)}\lc (1+\la)^{1/q}\|e_\la\|_{L^2(M)}, \quad
q=\frac{2d}{d-1}.$$
is obtained  by  applying instead Theorem \ref{oscfoldthm} (in $d-1$ dimensions, with $\kappa=d-2$)  in her argument.

\medskip

\section{Proof of Theorem \ref{lor}}\label{restrsect}
The theorem is a consequence of the
 following convolution inequality for the Fourier
transform of $\mu$ (\cf. \cite{gu} for the case of surface measure on the sphere).

\begin{proposition}\label{offdiag}
Let $\mu$ satisfy \eqref{dimhyp}, \eqref{Fmuhyp} and define
$Tf=f*\widehat \mu$.
Let
\Be \label{pone}
\rho= \frac{(d-a+2b)(d-a+b)}{(d-a)^2+3b(d-a)+b^2}, \qquad
\sigma=\frac{d-a+2b}{b}\,.
\Ee
 Then $T$ is of restricted weak type
$(\rho,\sigma)$ and of restricted weak type $(\sigma', \rho')$,
both with operator norm $O(A^{\frac{b}{d-a+b}} B^{\frac{d-a}{d-a+b}})$.
Moreover, if $\rho<p<\sigma'$ and $\frac 1p-\frac 1q
= \frac{d-a}{d-a+b}$, then  for any $s\in (0,\infty]$,
\Be\label{poneinterpol}
\|f*\widehat \mu\|_{L^{q,s}} \le \cC(p,s)
A^{\frac{b}{d-a+b}} B^{\frac{d-a}{d-a+b}} \|f\|_{L^{p,s}}.
\Ee
In particular \eqref{poneinterpol} holds for $p=p_\circ(a,b)$,
$q= (p_\circ(a,b))'$.\end{proposition}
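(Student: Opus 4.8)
\noi\emph{Proof proposal.} The plan is to write $T$ as a sum of frequency‑localized pieces, prove two elementary bounds for each piece, deduce the two endpoint restricted weak type inequalities by a Bourgain‑type summation argument, and finally obtain \eqref{poneinterpol} by real interpolation.

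Fix a radial $\chi\in C^\infty_c(\bbR^d)$ with $\chi\equiv1$ on $\{|x|\le1\}$ and $\supp\chi\subseteq\{|x|\le2\}$; set $\beta_0=\chi$ and $\beta_j=\chi(2^{-j}\cdot)-\chi(2^{-j+1}\cdot)$ for $j\ge1$, so that $T=\sum_{j\ge0}T_j$ with $T_jf=f*(\beta_j\widehat\mu)$. Since $\widehat\mu(-x)=\overline{\widehat\mu(x)}$ and the $\beta_j$ are real and even, every $T_j$ is self‑adjoint. Hypothesis \eqref{Fmuhyp} gives $\|\beta_j\widehat\mu\|_\infty\lesssim B2^{-jb}$, hence
\Be\label{pp1}\|T_j\|_{L^1\to L^\infty}\lesssim B2^{-jb},\Ee
while $\widehat{\beta_j\widehat\mu}=\widehat{\beta_j}*\mu$ with $\widehat{\beta_j}$ an $L^1$‑normalized bump concentrated at frequency scale $2^{-j}$, so \eqref{dimhyp} yields $\|\widehat{\beta_j}*\mu\|_\infty\lesssim A2^{j(d-a)}$, i.e.
\Be\label{pp2}\|T_j\|_{L^2\to L^2}\lesssim A2^{j(d-a)}.\Ee
I would also record $\|\beta_j\widehat\mu\|_{L^2}\lesssim A^{1/2}2^{j(d-a)/2}$, obtained by expanding $\|\beta_j\widehat\mu\|_{L^2}^2$ as a double integral against $d\mu\otimes d\mu$ of an $L^1$‑renormalized bump at scale $2^{-j}$ and again invoking \eqref{dimhyp}; together with \eqref{pp1}, \eqref{pp2} and Riesz--Thorin this supplies $\|T_j\|_{L^p\to L^q}\lesssim \mathcal A(p,q)\,2^{j\gamma(p,q)}$ on the relevant range of $(p,q)$, with $\mathcal A$ a monomial in $A,B$ and $\gamma$ convex.

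The crucial step is to pass from these pieces to the assertion that $T$ is of restricted weak type $(\rho,\sigma)$ and of restricted weak type $(\sigma',\rho')$, each with norm $O\big(A^{\frac{b}{d-a+b}}B^{\frac{d-a}{d-a+b}}\big)$. For $f=\indicator{E}$ and a level $\alpha>0$ I would split $Tf=\sum_{j<N}T_jf+\sum_{j\ge N}T_jf$, bound the high part in a Lebesgue norm in which the pieces \emph{decay} (via \eqref{pp1}) --- possibly only in $L^\infty$, so as to remove that part from $\{|Tf|>\alpha\}$ altogether --- and the low part in a norm in which they \emph{grow}, dominated by $j=N$; choosing $N=N(\alpha,|E|)$ to balance and summing the resulting distribution‑function bounds over dyadic $\alpha$. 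Run symmetrically on the bilinear form $\langle Tf,g\rangle$ with $g=\indicator{F}$ and the pair $(L^1\to L^\infty,\ L^2\to L^2)$, this argument produces only the conjugate endpoint $(p_\circ,p_\circ')$. To land instead at the \emph{non‑conjugate} pair $(\rho,\sigma)$ (note $1/\rho+1/\sigma>1$) I expect one must use the $L^1\to L^2$ bound for $\beta_j\widehat\mu$ in place of the $L^2\to L^2$ bound on the low part and treat $f$ and $g$ asymmetrically, so that the exponents of $|E|$ and $|F|$ come out exactly to $1/\rho$ and $1/\sigma'$ and the constant to $A^{\frac{b}{d-a+b}}B^{\frac{d-a}{d-a+b}}$; the statement at $(\sigma',\rho')$ then follows by duality from the self‑adjointness of $T$. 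This summation --- engineering the restricted‑weak‑type exponents to be $(\rho,\sigma)$ and $(\sigma',\rho')$ rather than the cruder $(p_\circ,p_\circ')$ one reads off directly --- is the main obstacle, and is exactly where the improvement over the classical Stein--Tomas argument is concentrated.

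For the last part, one checks from \eqref{pone} that $\frac1\rho-\frac1\sigma=\frac1{\sigma'}-\frac1{\rho'}=\frac{d-a}{d-a+b}$ and that $\frac1{p_\circ}=\frac12\big(\frac1\rho+\frac1{\sigma'}\big)$, $\frac1{p_\circ'}=\frac12\big(\frac1\sigma+\frac1{\rho'}\big)$. Interpolating $T:L^{\rho,1}\to L^{\sigma,\infty}$ and $T:L^{\sigma',1}\to L^{\rho',\infty}$ by the real method $(\cdot,\cdot)_{\theta,s}$, and using $(L^{\rho,1},L^{\sigma',1})_{\theta,s}=L^{p_\theta,s}$, $(L^{\sigma,\infty},L^{\rho',\infty})_{\theta,s}=L^{q_\theta,s}$ (legitimate since $\rho\ne\sigma'$), where $1/p_\theta=(1-\theta)/\rho+\theta/\sigma'$ and $1/q_\theta=(1-\theta)/\sigma+\theta/\rho'$, one gets $T:L^{p,s}\to L^{q,s}$ for every $s\in(0,\infty]$ whenever $\rho<p<\sigma'$ and $\frac1p-\frac1q=\frac{d-a}{d-a+b}$; since the two inputs share the norm $M=A^{\frac{b}{d-a+b}}B^{\frac{d-a}{d-a+b}}$, the output has norm $\mathcal C(p,s)\,M$ with $\mathcal C(p,s)$ the interpolation constant (which degenerates as $p\to\rho^+$ or $p\to\sigma'^-$), and this is \eqref{poneinterpol}. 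Taking $\theta=\frac12$ and $s=2$ specializes to $p=p_\circ$, $q=p_\circ'$.
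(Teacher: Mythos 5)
Your frame is right — the dyadic decomposition of $\widehat\mu$, the bounds $\|T_j\|_{L^1\to L^\infty}\lesssim B2^{-jb}$ and $\|T_j\|_{L^2\to L^2}\lesssim A2^{j(d-a)}$, Bourgain's summation trick, and the closing real-interpolation step from the two restricted weak type endpoints to \eqref{poneinterpol} all match the paper. But the heart of the proposition is the restricted weak type $(\rho,\sigma)$ estimate, and there your proposal stops at a guess: you ``expect'' that using the bound $\|T_j\|_{L^1\to L^2}\lesssim A^{1/2}2^{j(d-a)/2}$ asymmetrically will produce the exponents $(1/\rho,1/\sigma')$ with constant $A^{b/(d-a+b)}B^{(d-a)/(d-a+b)}$. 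This cannot work from the elementary piecewise bounds alone. Writing $c=d-a$ and testing $\langle T\indicator{E},\indicator{F}\rangle$, the available bounds for the pieces are $B2^{-jb}|E||F|$, $A^{1/2}2^{jc/2}|E||F|^{1/2}$, $A^{1/2}2^{jc/2}|E|^{1/2}|F|$ and $A2^{jc}|E|^{1/2}|F|^{1/2}$; summing the pointwise minimum over $j$ yields only $\min\big(|E|\,|F|^{1/\sigma'},\ |E|^{1/\sigma'}|F|,\ (|E||F|)^{1/p_\circ}\big)$ up to constants, and for instance when $c=b$, $|E|=2$, $|F|=4$ each of these ($2^{7/3},\,2^{8/3},\,2^{9/4}$) exceeds the target $|E|^{1/\rho}|F|^{1/\sigma'}=2^{13/6}$. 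Equivalently: interpolating the growing bounds among themselves gives $\|T_j\|_{L^{p_\circ}\to L^2}\lesssim 2^{j(d-a)(1+\theta)/2}$ with $\theta=\frac{d-a}{d-a+b}$, and that extra factor $2^{j(d-a)\theta/2}$ shifts the Bourgain-interpolated endpoint strictly away from $(\rho,\sigma)$.

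The missing idea is a bootstrap. The paper first runs Bourgain's trick on the pair $(L^1\to L^\infty,\ L^2\to L^2)$ to get the restricted weak type $(p_\circ,p_\circ')$ bound for the full operator $T$, hence via the $T^*T$ identity the weak restriction inequality $\int|\widehat f|^2\,d\mu\lesssim A^{1-\theta}B^{\theta}\|f\|_{L^{p_\circ,1}}^2$. It then feeds this back into the pieces: by Plancherel, $\|f*\widehat\mu_j\|_2^2=\int|\widehat f|^2|\mu_j|^2\le A2^{j(d-a)}\int|\widehat f|^2|\mu_j|$, and since $\mu_j=\mu*\cF^{-1}[\chi_j]$ the last integral is controlled by averaging the weak restriction inequality over modulations of $f$. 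This produces the non-elementary bound $\|T_j\|_{L^{p_\circ,1}\to L^2}\lesssim A^{1-\theta/2}B^{\theta/2}2^{j(d-a)/2}$ — the same growth rate $2^{j(d-a)/2}$ as your trivial $L^1\to L^2$ bound, but on the much larger domain $L^{p_\circ,1}$ — and a second application of Bourgain's trick against $\|T_j\|_{L^1\to L^\infty}\lesssim B2^{-jb}$, with $\gamma=\frac{d-a}{d-a+2b}$, then lands exactly at $(\rho,\sigma)$ with the stated constant. Without this self-improvement step the exponents in \eqref{pone} are out of reach; you should add it before the final duality and interpolation, which are otherwise fine.
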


\begin{proof}[Proof of Theorem \ref{lor}]
 Taking Proposition \eqref{offdiag} for granted the conclusion
follows from \eqref{poneinterpol} for $s=2$, $p=p_\circ$. Indeed,  by  Tomas' $T^*T$ argument, with $\widetilde f:=f(-\cdot)$,
\begin{multline}\label{tomas}\int |\widehat f(\xi)|^2 d\mu =
\int \overline f(x)\, \widetilde f*\widehat
\mu(-x) \,dx \le \|f\|_{L^{p_\circ,2}}
\|\widetilde f*\widehat \mu\|_{L^{p_\circ',2}}  \\
\lc A^{\frac{b}{d-a+b}} B^{\frac{d-a}{d-a+b}} \|f\|_{L^{p_\circ,2}}^2 .
\end{multline}
\end{proof}

\begin{remark}[{\it Bourgain's interpolation argument}]
In the  proof of Proposition \ref{offdiag} we use a trick
introduced by Bourgain \cite{bo1} in his proof of an endpoint
bound for the spherical maximal function,
see  \S 6.2 in \cite{csww} for an abstract analogue.
In this version we are given
pairs of  spaces
$\overline A=(A_0,A_1)$, $\overline B=(B_0,B_1)$, and operators $T_j$
that map $A_i$ to $B_i$ and we assume that
$\|T_j\|_{A_0\to B_0} \le M_0 2^{-j\beta_0}$ and
$\|T_j\|_{A_1\to B_1} \le M_1 2^{j\beta_1}$ for some
 $\beta_0>0$, $\beta_1>0$.
Let
$\vartheta=\frac{\beta_0}{\beta_0+\beta_1}$.
Then the result is that $T=\sum T_j$ maps the Lions-Peetre
 interpolation space
$\overline A_{\vth,1}$ to $\overline B_{\vth,\infty}$ with operator
 norm $C(\beta_0, \beta_1) M_0^{1-\vth} M_1^\vth$. In applications
we are mostly dealing with Lebesgue or Lorentz spaces and the result
then
involves a restricted weak type estimate, as in \cite{bo1}.
\end{remark}

\begin{proof}[Proof of Proposition
 \ref{offdiag}]
We prove the $L^{\rho,1}\to L^{\sigma, \infty}$
inequality.
We use the Tomas approach in \cite{tomas1}, \cite{mock} and  dyadically decompose $\widehat \mu$. Let $\chi_0$ be smooth and
supported in $\{x:|x|<1\} $ and let $\chi_0(x)=1$ for $|x|\le 1/2$.
For $j\ge 1$ let $\chi_j(x)=
\chi_0(2^{-j}x)-\chi_0(2^{1-j}x)$, so $\sum_{j=0}^\infty
\chi_j(x)\equiv 1$. Let $\mu_j=\mu* \cF^{-1}[\chi_j]$.
Since $\mu$ is a probability measure it is clear that
$\|\mu_0\|_\infty +\|\widehat \mu_0\|_\infty \lc 1 .$
As $A,B\ge 1$ it is easily verified (for  details {\it cf.} \cite{mock}) that for $j\ge 0$   assumption
\eqref{Fmuhyp}
implies
\Be \label{Fmujinfty}
\|\widehat \mu_j\|_\infty \lc B 2^{-jb}\Ee
 and that  assumption
\eqref{dimhyp} implies
\Be\label{mujinfty}
\|\mu_j\|_\infty\lc A 2^{j(d-a)}\,.\Ee
Therefore, if we define $T_j f=f*\widehat \mu_j$, we have
$\|T_j\|_{L^1\to L^\infty} \lc B 2^{-jb}$ and
$\|T_j\|_{L^2\to L^2} \lc A2^{j(d-a)}$.

Now let
\Be \label{thetadef} \theta= \frac{d-a}{d-a+b}\Ee
so that $(1-\theta)(d-a)+ \theta(-b)=0$.
 We calculate that for $p_\circ=
\frac{2(d-a+b)}{2(d-a)+b}$ we have
$(1-\theta) (\frac 12,\frac 12)+ \theta({1},
\frac{1}{\infty})=(\frac 1{p_\circ}, 1-\frac 1{p_\circ})$.
Now the two inequalities for $T_j$   allow us to apply
Bourgain's  interpolation trick;
the result is that
 the operator $T=\sum
T_j$ is of restricted weak type $(p_\circ, p_\circ')$, with operator
norm
$\le C A^{1-\theta} B^{\theta}
$, and if $I$ is a compact subinterval of $(0,\infty)$ then for $a,b\in
I$ the
constants  $C(a,b)$ depend only on $I$.
Thus we have proved
$$\|f*\widehat  \mu\|_{L^{p_\circ',\infty}}\lc A^{\frac{b}{d-a+b}} B^{\frac{d-a}{d-a+b}}
  \|f\|_{L^{p_\circ,1}}.$$
By applying Tomas' argument   we get
\begin{multline}\label{tomasweak}\int |\widehat f(\xi)|^2
  d\mu
 =\int \overline f(x) \widetilde f*\widehat
\mu(-x) dx \\ \lc \|f\|_{L^{p_\circ,1}}
\|\widetilde f*\widehat \mu\|_{L^{p_\circ',\infty}}  \lc
A^{\frac{b}{d-a+b}} B^{\frac{d-a}{d-a+b}}
 \|f\|_{L^{p_\circ,1}}^2
\end{multline}
which is weaker than \eqref{tomas}.

We use \eqref{tomasweak} to bound
$\|f*\widehat \mu_j\|_2$.
By Plancherel's theorem and \eqref{mujinfty},
\begin{align*}
\|f*\widehat \mu_j \|_2 &= \Big(\int|\widehat f(\xi)|^2 |\mu_j(\xi)|^2
d\xi\Big)^{1/2}
\\&\lc A^{1/2} 2^{j \frac{d-a}{2}}
\Big(\int|\widehat f(\xi)|^2 |\mu_j(\xi)|
d\xi\Big)^{1/2}\\
&\lc A^{1/2} 2^{j \frac{d-a}{2}}
\Big(\int |\cF^{-1}[\chi_j](\xi)|
\int|\widehat f(\eta+\xi)|^2 d\mu(\eta)
\, d\xi\Big)^{1/2}\,.
\end{align*} By \eqref{tomasweak}, this is
$$\lc A^{1/2} 2^{j \frac{d-a}{2}}
\Big(\int \frac{2^{jd}}{(1+2^j|\xi|)^{d+1}}
A^{1-\theta} B^{\theta}
\big\| f e^{\inn{2\pi i \cdot}{\xi}} \big\|_{L^{p_\circ,1}}^2
\, d\xi\Big)^{1/2}\,
$$
and hence we obtain
\Be\label{Tjpcirc2}
\|f*\widehat\mu_j\|_2 \lc
A^{1-\theta/2}
B^{\theta/2}
 2^{j \frac{d-a}{2}}
 \|f\|_{L^{p_\circ,1}}.
\Ee
We interpolate this estimate with the $L^1\to L^\infty$ bound $O(B2^{-jb})$.
Let \Be\label{gadef}
\ga= \frac{d-a}{d-a+2b}.
\Ee
so that  $(1- \ga)\frac{d-a}2 +\ga(-b)=0$. A calculation shows that if  $\rho, \sigma$ are in \eqref{pone},
then $(1-\ga)(\frac{1}{p_\circ},\frac 12)+\ga (1,\frac
1{\infty})=(\frac {1}{\rho}, \frac{1}{\sigma})$.
Thus, again by Bourgain's interpolation trick, the operator of
 convolution with $\widehat \mu$ is of restricted weak type
 $(\rho,\sigma)$, with operator norm $\lc (A^{1-\frac \theta   2}
B^{\frac\theta 2})^{1-\ga }B^\ga $.
One calculates from \eqref{thetadef}, \eqref{gadef} that
$(1-\ga)(1-\frac \theta 2)=\frac{b}{d-a+b}=1-\theta$ and
$(1-\ga)\frac \theta 2 +\ga=\frac{d-a}{d-a+b}=\theta$
which yields
$$
\|f*\widehat \mu \|_{L^{\sigma,\infty}} \lc A^{1-\theta} B^\theta
\|f\|_{L^{\rho,1}},
$$
as claimed. The corresponding $L^{\sigma',1}\to L^{\rho', \infty}$
inequality
follows by
duality. Finally, inequality
\eqref{poneinterpol} for $\rho<p<\sigma'$ follows by real
interpolation between the $L^{\rho,1}\to L^{\sigma, \infty}$ and the
$L^{\sigma',1}\to L^{\rho', \infty}$ inequality.
To obtain the $L^{p_\circ,s}\to L^{p_\circ',s}$ inequality note that
$(
1/{p_\circ}, 1/{p_\circ'})$ is the midpoint of the interval with
endpoints $(1/\rho,1/\sigma)$ and
$(1/\sigma', 1/{\rho'})$.
\end{proof}


\section{Proof of Theorem \ref{oscthm}}\label{oscintsect}
We may use a partition of unity and a compactness argument to reduce to the situation
that
 the  amplitude $\zeta$ has support in $\{(x,y): |x|<\eps^2, |y|<\eps^2\}$, for small  $\eps>0$. After  changes of variable in $x$ and in $y$  we may assume that
\begin{align} \label{phixprimey}
\varphi_{x'y}(0,0)&=I_{d-1}
\\
\label{phixprimeyy}
\varphi_{x'yy}(0,0)&=0
\\ \label{phixdy}
\varphi_{x_dy}(0,0)&=0
\\ \label{phixdyy}
\rank  \varphi_{x_dyy}(0,0)&\ge \kappa\,.
\end{align}
The conclusion of the Theorem is equivalent with the case $s=2$ of
$$\|T_\la T_\la^*\|_{L^{q_\circ',s}(\bbR^d)\to L^{q_\circ,s}(\bbR^d)}
\lc
\la^{-2d/q_\circ}.$$
We split the operator  $T_\la T_\la^*$.
Let $\eta_0\in C^\infty_0(\bbR)$ so that $\eta_0(s)=1$ for $|s|\le 1/2$ and $\eta_0$ supported in $(-1,1)$. For $j\ge 1$, let
$\eta_j= \eta_0(2^{-j}\cdot)-\eta_0(2^{-j+1}\cdot)$.
We set
\Be \label{bjdef}
\begin{aligned}
b_j(w,z,y)&= \zeta(w,y)\overline{\zeta(z,y)} \eta_j(\la (w_d-z_d))
\eta_0(\eps^{-1}\la 2^{-j}|w'-z'|)
\\
\widetilde b_j(w,z,y)&= \zeta(w,y)\overline{\zeta(z,y)} \eta_j(\la (w_d-z_d))
(1-\eta_0(\eps^{-1}\la 2^{-j}|w'-z'|))
\end{aligned}\Ee and let $\cS^\la_j$ be
 the operators with integral kernel
\begin{align*}
S^\la_j(w,z)=
\int b_j(w,z,y) e^{i\la (\vphi(w,y)-\vphi(z,y))} dy;
\end{align*}
also let $\widetilde S^\la_j(w,z)$ be similarly defined
with $\widetilde b_j$
 in place of $b_j$.
Then
\Be \label{TstarTdecomp}
T_\la T_\la^*= \sum_{j\ge 0} \cS^\la_j+
\sum_{j\ge 0} \widetilde \cS^\la_j.
\Ee
Note that $b_j$ is supported where $|w'-z'|\ll|w_d-z_d|$
and $|w_d-z_d|\approx 2^j\la^{-1}$.

For integration by parts arguments we analyze
\begin{multline*}\vphi_y'(w,y)-\vphi_y'(z,y)
= \\
\int_0^1 (w'-z')^{\text{\rm t}} \vphi_{x'y}(z+s(w-z)) ds +
\int_0^1 (w_d-z_d) \vphi_{x_dy}(z+s(w-z)) ds
\end{multline*}
and by \eqref{phixprimey}, \eqref{phixdy},
$\vphi_{x'y}=I_{d-1}+O(\eps^2)$,
$\vphi_{x_dy}=O(\eps^2)$.
On the support of $ \widetilde b_j$ we have $|w'-z'|\ge c\eps |w_d-z_d|$ and thus
\begin{align*}|\vphi_y'(w,y)-\vphi_y'(z,y)|  &\ge |w'-z'|- C\eps^2 |w_d-z_d|
\\
&\ge \eps |w-z| \text { for  } (w,z,y)\in \supp \widetilde b_j.
\end{align*}
Integration by parts with respect to $y$ yields
$$\Big|\sum_{j\ge 0} \widetilde S^\la_j(w,z)\Big| \le C_{\eps,N} (1+\la|w-z|)^{-N}. $$
From Schur's Lemma and subsequent interpolation with a trivial $L^\infty$
 bound we get for
$2<q<\infty$, $0<s\le\infty$
\begin{equation}\label{remainder}
\Big\|\sum_{j\ge 0} \widetilde \cS^\la_j \Big\|_{L^{q',s}\to L^{q,s}} \lc
 \la^{-2d/q}\,;
\end{equation}
moreover, by the support properties of $b_0$ and Schur's lemma
\begin{equation}\label{remainder2}
\big\|\cS^\la_0 \big\|_{L^{q',s}\to L^{q,s}} \lc
 \la^{-2d/q}.
\end{equation}
We shall use these  inequalities  for $s=2$.

The main task is to show that
\begin{equation} \label{maindiagest}
\Big\|\sum_{j>0} \cS^\la_j\Big\|_{L^{q_\circ',s}\to L^{q_\circ,s}} \lc \la^{-2d/q_\circ}, \quad
q_\circ=2+4\ka^{-1}\,.
\end{equation}

The crucial step in the proof of \eqref{maindiagest} is to establish
part (ii) in
\begin{proposition}\label{oscoffdiag}
(i) For $j>0$,
\Be\label{L1Linfty}
\big\|\cS^\la_j \Big\|_{L^{1}(\bbR^d)\to L^\infty(\Bbb R^d)}
\lc 2^{-j \ka/2}.
\Ee

(ii) For $q_\circ=2+4\ka^{-1}$, $j> 0$
\Be \label{oscoffdiagest}
 \big\|\cS^\la_j \Big\|_{L^{q_\circ',1}(\bbR^d)\to L^2(\Bbb R^d)} \,+\,
\big\|\cS^\la_j \Big\|_{L^2(\bbR^d)\to L^{q_\circ,\infty}(\bbR^d)}
\lc 2^{j/2}\la^{-d(\frac{1}{q_\circ}+\frac 12)}\,.
\Ee
\end{proposition}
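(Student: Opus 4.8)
\emph{Part (i).} This is a stationary‑phase estimate. On $\supp b_j$ one has $|w_d-z_d|\approx 2^j\la^{-1}$ and $|w'-z'|\ll|w_d-z_d|$. Writing $h=w-z$ and $\psi(y)=\varphi(w,y)-\varphi(z,y)=\int_0^1\langle h,\varphi_x(z+sh,y)\rangle\,ds$, Taylor expansion together with \eqref{phixprimeyy} (so that $\varphi_{x_iyy}=O(\eps^2)$ on $\supp b_j$ for $i<d$) and $|h'|\ll|h_d|$ gives
\[
\nabla_{yy}^2\psi(y)=h_d\,\varphi_{x_dyy}(0,0)+O(\eps^2|h_d|).
\]
Since $\rank\varphi_{x_dyy}(0,0)\ge\kappa$ by \eqref{phixdyy}, a linear change of the $y$‑variables makes the top‑left $\kappa\times\kappa$ block of $\varphi_{x_dyy}(0,0)$ invertible; then for $\eps$ small the corresponding block of $\nabla_{yy}^2\psi$ equals $h_d$ times a matrix of determinant $\gtrsim1$, where $y=(y_1,y_2)\in\bbR^\kappa\times\bbR^{d-1-\kappa}$. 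As all $y$‑derivatives of $\psi$ are $O(|h_d|)$, the phase $\la\psi$ is $(\la h_d)$ times a function with uniformly bounded derivatives and non‑degenerate $y_1$‑Hessian; the method of stationary phase in $y_1$ with parameter $|\la h_d|\approx2^j$, followed by the trivial integration over the compactly supported $y_2$, yields $|S^\la_j(w,z)|\lc2^{-j\kappa/2}$, which is \eqref{L1Linfty}.

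\emph{Part (ii).} Since $\cS^\la_j$ is self‑adjoint (take $\eta_0$ real and even) and $(L^{q_\circ',1})^*=L^{q_\circ,\infty}$, the two estimates in \eqref{oscoffdiagest} are equivalent; I prove the one for $\cS^\la_j\colon L^{q_\circ',1}\to L^2$. The idea is that $\cS^\la_j$ is, up to controllable errors, a $\la$‑rescaled dyadic piece of a Stein--Tomas operator. Expanding $\varphi(w,y)-\varphi(z,y)=\langle w-z,\varphi_x(z,y)\rangle+O(|w-z|^2)$ and using $\zeta(w,\cdot)\approx\zeta(z,\cdot)$ near the diagonal, the kernel of $\cS^\la_j$ is, after freezing $z$ on a small ball and discarding the quadratic and higher terms of the phase (whose size on $\supp b_j$ is $\la|w-z|^2\lc\eps^2\,2^j$, a lower‑order perturbation of the main parameter $2^j$), equal to $\widehat{(d\sigma_z)_j}\big(\la(w-z)\big)$, where $d\sigma_z$ is the smooth density on $\Sigma_z=\{\varphi_x(z,y):y\}$ obtained by pushing forward $|\zeta(z,\cdot)|^2\,dy$, and $(d\sigma_z)_j=d\sigma_z*\cF^{-1}[\chi_j]$ is the $j$‑th piece of the dyadic decomposition of \S\ref{restrsect}. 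By \eqref{CSgen} (equivalently \eqref{phixdyy}) the hypersurface $\Sigma_z$ has at least $\kappa$ non‑vanishing principal curvatures, uniformly in $z$, so $d\sigma_z$ obeys \eqref{dimhyp} with $a=d-1$ and \eqref{Fmuhyp} with $b=\kappa/2$ and $A,B\lc1$. In ambient dimension $d$ this gives $p_\circ(a,b)=q_\circ'$ and $\theta=\tfrac{d-a}{d-a+b}=\tfrac2{\kappa+2}$, so the estimate \eqref{Tjpcirc2} established in the proof of Proposition \ref{offdiag} reads $\|f*\widehat{(d\sigma_z)_j}\|_2\lc2^{j(d-a)/2}\|f\|_{L^{q_\circ',1}}=2^{j/2}\|f\|_{L^{q_\circ',1}}$. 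Undoing the $\la$‑dilation multiplies the $L^{q_\circ',1}\to L^2$ operator norm by $\la^{-d(1+\frac12-\frac1{q_\circ'})}=\la^{-d(\frac1{q_\circ}+\frac12)}$, which is exactly the right‑hand side of \eqref{oscoffdiagest}. In this picture part (i) is just the companion bound $\|\widehat{(d\sigma_z)_j}\|_\infty\lc2^{-jb}$; so (i) and (ii) are precisely the two inputs needed to run, in this variable‑coefficient setting, the Bourgain‑interpolation argument behind Proposition \ref{offdiag}.

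\emph{Main obstacle.} The real work is making the reduction of the previous paragraph rigorous, i.e.\ showing that the $w$‑ versus $z$‑dependence of the kernel --- the quadratic and higher terms of the phase, the $z$‑dependence of $\zeta$ and of $\varphi_x(z,\cdot)$, and the cutoff to the cone $|w'-z'|\ll|w_d-z_d|$ --- contributes only errors that either decay rapidly or are smaller by a factor $\eps^2$, so that the uniform constant of Theorem \ref{lor}/Proposition \ref{offdiag} (harmless here, since $a=d-1$ and $b=\kappa/2$ are fixed and $A,B\lc1$) may be invoked. For the pieces with $2^j\gtrsim\la^{1/2}$ this forces the customary rescaling of the $2^j$‑scale blocks and uses the rank hypothesis \eqref{rank} together with the uniformity of \eqref{CSgen}.
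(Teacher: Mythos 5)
Part (i) of your argument is essentially the paper's: the nondegenerate $\kappa\times\kappa$ minor of $h_d\varphi_{x_dyy}$ gives a Hessian block of size $\approx\la h_d\approx 2^j$ and stationary phase in those $\kappa$ variables yields \eqref{L1Linfty}. The bookkeeping in part (ii) is also consistent (with $a=d-1$, $b=\kappa/2$ one indeed has $p_\circ=q_\circ'$, and the dilation exponent is right). The problem is that the entire content of part (ii) lies in the reduction you defer to the ``main obstacle'' paragraph, and the way you propose to handle it does not work. The phase error incurred by replacing $\varphi(w,y)-\varphi(z,y)$ with $\langle w-z,\varphi_x(z_0,y)\rangle$ (linearization plus freezing $z$ at the center $z_0$ of a $2^j\la^{-1}$-ball) is of size $\la|w-z|^2+\la|w-z|\,|z-z_0|\approx 2^{2j}\la^{-1}$. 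This is $O(1)$ only when $2^j\lesssim\la^{1/2}$; for larger $j$ it is a \emph{large} phase, and being ``smaller by a factor $\eps^2$ than $2^j$'' does not make it discardable --- $e^{iE}$ with $E\approx\eps^2 2^j\gg1$ destroys the convolution structure and cannot be absorbed into the amplitude. This is exactly the point where H\"ormander's problem genuinely differs from the restriction problem (cf.\ Bourgain's counterexamples for $p>2$), so no naive freezing can be expected to succeed uniformly in $j$.

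Your closing remark that one should rescale the $2^j\la^{-1}$-blocks is the right instinct, but after that rescaling one does \emph{not} land in the constant-coefficient Stein--Tomas setting: one obtains a unit-scale oscillatory integral operator $\cR_\mu$ with new large parameter $\mu=2^j$ and a phase $\Psi$ that still varies over the block, so the same difficulty recurs at scale $\mu$. The paper resolves this with a second full pass of the machinery at scale $\mu$: a $TT^*$ step, stationary phase in the $(v',h)$ variables to reduce $\cR_\mu\cR_\mu^*$ to a kernel $\mu^{1-d}\int e^{i\mu\Psi(u,\tilde u,y)}\alpha\,dy$, a second dyadic decomposition into pieces $\cV^\mu_\ell$, the two bounds \eqref{L1Linftysecond} and \eqref{L2L2second}, and Bourgain's interpolation to get \eqref{R-restr-est}. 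Crucially, the recursion terminates because the $L^2$ bound \eqref{L2L2second} for the pieces is \emph{not} another restriction estimate but a nondegenerate canonical-graph estimate (\eqref{W-ell-bound}, via Lemma 2.3 of \cite{grse}), for which freezing $\om_d,\tilde\om_d$ and using \eqref{phixprimey} suffices. None of this second level appears in your proposal, so as written the proof of \eqref{oscoffdiagest} is missing its essential step.
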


\begin{proof}[Proof that Proposition \ref{oscoffdiag} implies
\eqref{maindiagest}]


We interpolate \eqref{L1Linfty} with the two inequalities in
\eqref{oscoffdiagest}. Let
$\rho=\frac{2(\ka+1)(\ka+2)}{\ka^2+6\ka+4}$, $\sigma=\frac{2\ka+2}{\ka}$
(which coincide with the definition in \eqref{pone} for the parameters
$(a,b)=(d-1,\ka/2)$). Then $(\frac{1}{\rho}, \frac{1}{\sigma})=
(\frac{1-\gamma}{p_\circ}+\gamma, \frac{1-\gamma}{2})$ with
$p_\circ=q_\circ'$ and $\gamma=(1+\ka)^{-1}$ as in
\eqref{gadef}.
We argue as in  the proof of Proposition
\ref{offdiag},  and obtain, by real interpolation  and  Bourgain's trick,
that
\Be  \label{tworhosigmaineq}
 \Big\|\sum_{j>0} \cS^\la_j\Big\|_{L^{\rho,1}\to L^{\sigma,\infty}}
+
\Big\|\sum_{j>0} \cS^\la_j\Big\|_{L^{\sigma',1}\to L^{\rho',\infty}} \lc
\la^{-d(1-\frac{1}{\rho}+\frac{1}{\sigma})} \,.
\Ee
Now  $1-\rho^{-1}+\sigma^{-1}
=2q_\circ^{-1}$
and we may interpolate the two inequalities in \eqref{tworhosigmaineq}
to deduce the assertion
\eqref{maindiagest}.
\end{proof}

\begin{proof}[Proof of  Proposition \ref{oscoffdiag}]
Note that by
\eqref{phixdyy}, \eqref{phixprimeyy} the determinant of a symmetric $\kappa\times\kappa$ minor
of
$\vphi_{x_dyy}$ is nonzero near the origin. This means that
for $|w'-z'|\le \eps |w_d-z_d|\approx 2^j\la^{-1}$,
the corresponding minor of
\begin{multline*}\la\big[\vphi_{y'y'}(w,y)-\vphi_{y'y'}(z,y)\big]
= \\
2^j \, \frac{w_d-z_d}{2^j\la^{-1}}\int_0^1
 \vphi_{x_dy'y'}(z+s(w-z)) ds +O(\eps 2^j)
\end{multline*}
has determinant  $\approx 2^j$.
Now inequality  \eqref{L1Linfty} follows easily
by a  stationary phase argument with respect to the relevant $\kappa$ coordinates.

For part (ii) we need  only prove the
$L^{2}\to L^{q_\circ,\infty} $ inequality since the
$L^{p_\circ,1}\to L^{2} $ inequality follows by taking adjoints
and replacing $\vphi$ by $-\varphi$.

We first notice  that $\cS^\la_j$ is identically zero if $2^j \gc \eps\la$ and in all other cases  $\cS^\la_j$
is essentially local on balls of diameter
$\approx 2^j\la^{-1}$. This means if $r\ge 2^j/ \la$ and  $f$ is supported in
the ball $B(a ,r)$ centered at $a\in \bbR^d$
then $\cS^\la_j f$ is supported in $B(a ,Cr)$.
Therefore it suffices to prove the inequality for functions $f$ supported in $B(a, 2^j\lambda^{-1})$.
We set \Be\label{mudelta}\mu=2^j, \qquad  \delta=2^j\la^{-1},
\Ee
and change variables
$w=a +\delta u$ and $z=a +\delta v$.

Then $$\cS_j^\la f(a +\delta v)=\delta^d \cR_{\mu}[f(a +\delta \cdot)]$$
where \Be\label{Rmudef}
\cR_\mu g(u)\equiv \cR_\mu^{a ,\delta} g(u)=
\iint e^{-i\mu \Psi(u,v,y)} \beta(u,v,y) dy \,g(v) dv,
\Ee
with
\begin{align}\label{Psidef}
\Psi(u,v,y)&\equiv \Psi(u,v,y;a ,\delta)
= \frac 1\delta\big[\vphi(a+\delta u,y)-\vphi(a+\delta v,y)\big]\\
&= \int_0^1
\inn{u-v}{\nabla_x\vphi(a +\delta(v+s(u-v)),y)} \, ds
\notag
\end{align} and
$$\beta(u,v,y)= b_j(a +\delta u,a +\delta v,y)$$
with  $\delta=2^j\la^{-1}$.  By rescaling
$$\|\cS^{\la}_{j}\|_{L^2\to L^{q_\circ,\infty}}
\lc \delta^{\frac d2+\frac d{q_\circ}}
\sup_{|a |\lc \eps^2}\|\cR_\mu^{a ,\delta}\|\ci{L^2\to L^{q_\circ,\infty}},
\qquad \delta=2^j\la^{-1}=\mu \la^{-1}
$$
and thus we just need to show that for $\mu\ge 1$
$$ \|\cR_\mu\|_{L^2\to L^{q_\circ,\infty}}\lc \mu^{-\frac{d-1}{2} - \frac d{q_{\circ}}}.$$
This  of course follows from
\Be \label{R-restr-est}
\|\cR_\mu \cR_\mu^*\|_{L^{q_\circ',1}\to L^{q_\circ,\infty}}\lc
 \mu^{1-d -  {2d}/{q_{\circ}}}\,.
\Ee
We proceed to show \eqref{R-restr-est}
using an analogue of  Tomas' interpolation argument.
The Schwartz kernel  of $\cR_\mu \cR_\mu^*$ is given by
\Be
K_\mu(u,\tu)=
\iiint e^{i\mu (\Psi(u,v,y+h)-\Psi(\tu,v,y))}\gamma (u,\tu; v,y, h) \, dv\, dy\, dh
\Ee
where $\gamma(u,\tu;v,y,h)
=\beta(u,v,y+h)\overline{\beta(\tu,v,y)}$.

We now reduce the number of frequency variables by a straightforward
stationary phase arguments.
Let
\begin{multline*}
\theta(u,\tu,v',v_d, y,h)=\Psi(u,v,y+h)-\Psi(\tu,v,y)
\\=\frac 1\delta\big[
\vphi(a+\delta u, y+h)-\vphi(a+\delta v, y+h)
-\vphi(a+\delta \tu, y)+\vphi(a+\delta v, y)\big].
\end{multline*}
Then the partial Hessian of $\theta$  with
respect to the $(v',h)$-variables is given by
\begin{align*}
\begin{pmatrix} -\delta \vphi_{x'x'}(a+\delta v, y+h)&
-\vphi_{ x'y}(a+\delta v, y+h)\\ -\vphi_{y x'}(a+\delta v, y+h)
&
\inn{u-v}{\int_0^1\varphi_{xyy}(a+ \delta(v+s(u-v))y)\,ds}\end{pmatrix}\,.
\end{align*}
It is clearly nondegenerate on the support of our cutoff functions (with small $\eps$).
We observe that
\begin{align*}
\theta_{v'}=0\quad&\iff \quad \varphi_{x'}(a+\delta v, y+h)=
\varphi_{x'}(a+\delta v, y)
\\
\theta_{h}=0\quad&\iff \quad \inn{u-v}{\int_0^1\varphi_{xy}(a+ \delta(v+s(u-v))y)\,ds} =0
\end{align*}
 and these equations are solved by $h=0$ and $v'=\fv'(u,v_d,y)$
 for some smooth $\fv'$. We now observe that when $\theta$ is evaluated at
$h=0$, the result is independent of $v$, in fact
$$\theta(u,\tu,\fv'(u,v_d,y),v_d, y,0)= \frac{1}{\delta}\big[
\vphi(a+\delta u, y)-\vphi(a+\delta \tu, y)\big] = \Psi(u,\tilde u ,y).$$
The method of stationary phase
(applied in the $(v',h)$-variables)
gives
\Be
K_\mu(u,\tu)= \mu^{1-d}
\int e^{i\mu\Psi(u,\tu,y)}\alpha (u,\tu,y)  dy
\Ee
for suitable smooth amplitudes $\alpha$ depending smoothly on the
 parameters $a$ and $\delta$.

We now decompose the kernel in a way analogous to \eqref{TstarTdecomp}.
Split coordinates in $\bbR^d$ as $u=(u',u_d)$ and let,
for $\ell=0,1,2,\dots$ (and $\eta_\ell$ as in \eqref{bjdef})
\begin{align*}
\alpha_\ell(u,\tu,y)&= \alpha(u,\tu,y)
 \eta_\ell(\mu (u_d-\tu_d))
\eta_0(\eps^{-1}
\mu 2^{-\ell}|u'-\tu'|)
\\
\widetilde\alpha_\ell(u,\tu,y)&= \alpha(u,\tu,y)
 \eta_\ell(\mu (u_d-\tu_d))
(1-\eta_0(\eps^{-1}\mu 2^{-\ell}|u'-\tu'|)).
\end{align*} Let $\cV^\mu_\ell$
denote the operators with integral kernel
\begin{align*}
V^\mu_\ell=\mu^{1-d}
\int e^{i\mu \Psi(u,\tu,y)}\alpha_\ell (u,\tu,y) \,
 dy
\end{align*}
and let $\widetilde \cV^\mu_\ell$ and the kernel $\widetilde V^\mu_\ell$  be analogously defined with
$\widetilde \alpha_\ell$ in place of
$\alpha_\ell$.
 Then
\Be \label{RstarRdecomp}\cR_\mu \cR_\mu^*=
\sum_{\ell\ge 0} \widetilde {\cV}^\mu_\ell
+\sum_{\ell\ge 0} \cV^\mu_\ell\,.
\Ee

The straightforward  argument used for
\eqref{remainder} and \eqref{remainder2}
 now yields for
$2<q<\infty$, $0<s\le\infty$,
\begin{equation}\label{remaindersecond}
\big\|\mu^{d-1}\cV^\mu_0\big\|_{L^{q',s}\to L^{q,s}} +
\Big\|\mu^{d-1}\sum_{\ell\ge 0} \widetilde \cV^\mu_\ell
\Big\|_{L^{q',s}\to L^{q,s}} \lc \mu^{-2d/q}.
\end{equation}

We need to prove the appropriate
 $L^1\to L^\infty$ and $L^2\to L^2$ bounds for $\cV^\mu_\ell$
which are
\begin{align}\label{L1Linftysecond}
&\big\|\mu^{d-1}\cV^\mu_\ell \big\|_{L^{1}(\bbR^d)\to L^\infty(\Bbb R^d)}
\lc 2^{-\ell \ka/2}\, ,
\\ \label{L2L2second}
&\big\|\mu^{d-1}\cV^\mu_\ell \big\|_{L^{2}(\bbR^d)\to L^2(\Bbb R^d)}
\lc 2^{\ell}\mu^{-d} \, .
\end{align}
Then, by Bourgain's interpolation trick,
\eqref{L1Linftysecond} and \eqref{L2L2second} imply
\eqref{R-restr-est}.

It only remains to prove
\eqref{L1Linftysecond}
 and \eqref{L2L2second}.
The inequality \eqref{L1Linfty} (written for $(\ell, \mu, \alpha_\ell)$  in place of $(j, \la, \gamma_j$)) immediately yields \eqref{L1Linftysecond}.

For the $L^2$ bound we observe that
only the cases $2^\ell\lc \eps \mu$ are relevant and
that $\cV^\mu_\ell$
is essentially local on balls of diameter
$\approx 2^\ell\mu^{-1}$. Therefore it suffices to prove the inequality
for   $f$ supported in
the ball $B(b ,r)$ with $r= 2^\ell\mu^{-1}$.
We rescale and set
$u=b+2^\ell\mu^{-1}\om$ and $\tu=b +2^\ell\mu^{-1}\tom$.
Then \Be\label{secondrescale}
\cV_\ell^\mu f(b +2^\ell\mu^{-1}\omega)=(2^\ell\mu^{-1})^d
\cW^{\ell}[f(b +2^\ell\mu^{-1} \cdot)](\omega)
\Ee
where
\Be\label{Wellmudef}
\cW^\ell h(\om)=
 \iint e^{-i2^\ell  \Theta(\om,\tom,y)} \sigma(\om,\tom,y) dy \,h(\tom) d\tom,
\Ee
with
\begin{align*}\Theta(\om,\tom,y)
&= \frac{\mu}{ 2^{\ell}\delta}
\big[\vphi(a+\delta b+ \tfrac{2^\ell\delta}{\mu} \om,y)-
\vphi(a+\delta b+ \tfrac{2^\ell\delta}{\mu} \tom,y)\big]
\\&=
\inn{\om-\tom}{\int_0^1
\vphi_x(a+\delta b+ 2^\ell\la^{-1}(\tom+s(\om-\tom)),y) ds}\,.
\end{align*}
Recall that $\mu=2^j$ and $\delta= 2^j\la^{-1}$
so that $2^\ell \delta\mu^{-1}= 2^\ell\la^{-1}\le \eps$.
The phase $\Theta$ and therefore the operator $\cW^\ell$ depend
 on the points $a,b$ but the estimates will be uniform.

 We now show
that
\Be\label{W-ell-bound}
\|\cW^\ell\|_{L^2(\bbR^d)\to L^2(\bbR^d)}\lc 2^{-\ell(d-1)}
\Ee
which, by
\eqref{secondrescale}, implies that $\|\mu^{d-1} \cV_\ell^\mu\|_{L^2} \lc (2^\ell\mu^{-1})^d 2^{-\ell(d-1)}$. This is
\eqref{L2L2second}.

Finally, \eqref{W-ell-bound} follows from a standard $T^*T$ estimate
for oscillatory integral operators associated to a canonical graph,
here with $d-1$ frequency variables $y$ and space variables $\om'$,
$\tom'$, with frozen
$\om_d$, $\tom_d$.
For this result we refer to  Lemma 2.3 in \cite{grse}
which is built on an argument in   \cite{Hfio}. The required estimate
follows after noting that
$$\begin{pmatrix} \Theta_{\om'\tom'}&\Theta_{\om'y}
\\ \Theta_{y\tom'}&\Theta_{yy}\end{pmatrix}
= \begin{pmatrix} 0 &\vphi_{x'y}
\\ \vphi_{yx'}&\inn{\om-\tom}{\vphi_{xyy}}\end{pmatrix} \Big|_{(0,0)}
+O(\eps)
$$
has determinant bounded away from $0$. This  is immediate from
\eqref{phixprimey} (provided that $\eps$ is chosen small).
\end{proof}

\section{Proof of Theorem \ref{oscfoldthm}}\label{oscintfoldsect}
The proof is quite  analogous to
the proof of Theorem \ref{oscthm}, and therefore we will give only a sketch.
As discussed in \cite{grse} one can assume after suitable changes of variables in the $x$ and the $y$ coordinates that,
with the $y$-variables  split as $y=(y', y_d)$,
\Be \label{rankpiL}
\begin{pmatrix} \Phi_{x'y'}&\Phi_{x'y_d}\\
\Phi_{x_dy'}&\Phi_{x_dy_d} \end{pmatrix}\Big|_{(0,0)}=
\begin{pmatrix} I_{d-1}&0\\0&0\end{pmatrix}
\Ee
and\begin{align} \label{foldpiL}
 \Phi_{x_dy_dy_d}(0,0)&\neq 0\, ,
 \\ \label{Phixdyprimeyd}
 \Phi_{x_dy'y_d}(0,0)&=0 \,.\end{align}
\eqref {foldpiL} reflects the fold condition on $\pi_L$.
Moreover,
\Be
\label{curvpiL}
\rank (\Phi_{x_d y'y'})\ge \kappa
\Ee
which expresses the curvature condition.
Note that  by
\eqref{foldpiL}, \eqref{Phixdyprimeyd} and \eqref{curvpiL},
\Be \label{rankkaplusone}
\rank (\Phi_{x_d yy})\ge \kappa+1.
\Ee


We shall argue  as in the proof of Theorem \ref{oscthm} and show that
$$\|\cT_\la \cT_\la^*\|_{L^{q_1',s}(\bbR^d) \to L^{q_1,s}(\bbR^d)} \le \la^{-2d/q_1}.$$
We proceed splitting the operator $\cT_\la \cT_\la^*$ as in \eqref{TstarTdecomp}
with the only difference that $\vphi$ is replaced with $\Phi$
and now the $y$ integrations are over small open sets in $\bbR^d$.
The proof of the estimate analogous to \eqref{remainder} is exactly
 the same, and then again the main task is
to establish that
\begin{equation} \label{mainfolddiagest}
\Big\|\sum_{j>0} \cS^\la_j\Big\|_{L^{q_1',s}\to L^{q_1,s}} \lc \la^{-2d/q_1}, \quad
q_1=\frac{2\ka+4}{\ka+1}\,.
\end{equation}
The following estimates are analogous to Proposition \ref{oscoffdiag}:
\Be\label{L1Linftyfold}
\big\|\cS^\la_j \Big\|_{L^{1}(\bbR^d)\to L^\infty(\Bbb R^d)}
\lc 2^{-j (\ka+1)/2},
\Ee
and
\Be \label{oscoffdiagfoldest}
 \big\|\cS^\la_j \Big\|_{L^{q_1',1}(\bbR^d)\to L^2(\Bbb R^d)} \,+\,
\big\|\cS^\la_j \Big\|_{L^2(\bbR^d)\to L^{q_1,\infty}(\bbR^d)}
\lc 2^{j/2}\la^{-(d/q_1+d/2)}\,.
\Ee

Given \eqref{L1Linftyfold} and \eqref{oscoffdiagfoldest}
Bourgain's interpolation argument  shows that
\Be  \label{tworhosigmaineqfold}
 \Big\|\sum_{j>0} \cS^\la_j\Big\|_{L^{\rho_1,1}\to L^{\sigma_1,\infty}}
+
\Big\|\sum_{j>0} \cS^\la_j\Big\|_{L^{\sigma_1',1}\to L^{\rho_1',\infty}} \lc
\la^{-d(1-\frac{1}{\rho_1}+\frac{1}{\sigma_1})}
\Ee
where
$\rho_1$, $\sigma_1$ are  as in \eqref{pone}, with
$a=d-1$, $b=(\ka+1)/2$.
 Then $(\frac{1}{\rho_1}, \frac{1}{\sigma_1})=
(\frac{1-\gamma}{q_1'}+\gamma, \frac{1-\gamma}{2})$ with
$\gamma=(2+\ka)^{-1}$.
Since   $1-\rho_1^{-1}+\sigma_1^{-1}
=2q_1^{-1}$
we get
\eqref{mainfolddiagest}.

It remains to establish the estimates
\eqref{L1Linftyfold} and \eqref{oscoffdiagfoldest}.
Again, \eqref{L1Linftyfold} follows using the method of stationary
phase and the better bound is due to the condition
 \eqref{rankkaplusone}.
The estimate  \eqref{oscoffdiagfoldest}
is proved analogously to
\eqref{oscoffdiagest} above. The phase functions $\Psi$, $\theta$,
$\Theta$ as well as the operators
$\cR_\mu$, $\cV^\mu_\ell$, $\widetilde \cV^\mu_\ell$ and then
$\cW^\ell$ are defined as in the proof of Proposition \ref{oscoffdiag},
with the exception that all $y$ integrations are over a small  open
set  in $\Bbb R^d$ (instead $\Bbb R^{d-1}$ above).
We need to show  the analogue of \eqref{R-restr-est}, namely that the
$L^{q_1',1}\to L^{q_1,\infty} $ operator norm of $\cR_\mu \cR_\mu^*$
is $O(\mu^{1-d-2d/q_1})$. As above this follows from the analogues
of \eqref{L1Linftysecond}, \eqref{L2L2second} which read now
\begin{align}\label{L1Linftysecondfold}
&\big\|\mu^{d-1}\cV^\mu_\ell \Big\|_{L^{1}(\bbR^d)\to L^\infty(\Bbb R^d)}
\lc 2^{-\ell (\ka+1)/2}\, ,
\\ \label{L2L2secondfold}
&\big\|\mu^{d-1}\cV^\mu_\ell \Big\|_{L^{2}(\bbR^d)\to L^2(\Bbb R^d)}
\lc 2^{\ell/2}\mu^{-d}\, .
\end{align}
The stationary phase argument using
 \eqref{rankkaplusone}
implies \eqref{L1Linftysecondfold}.

Turning to \eqref{L2L2secondfold} the previous  rescaling argument
reduces matters to a better estimate for the  $\cW^\ell$,
namely the bound
\Be\label{W-ell-boundfold}
\|\cW^\ell\|_{L^2(\bbR^d)\to L^2(\bbR^d)}\lc 2^{-\ell(d-\frac12)}\,.
\Ee
The gain of a factor of
$2^{-\ell/2}$ compared to \eqref{W-ell-bound}
(already crucial  in \cite{grse}) comes from the fact
that we
 now have $d$ frequency
variables $y$ and that we can use the fold condition for $\pi_L$.
We freeze  $\om_d$, $\tom_d$ and observe that in the domain of
$\Theta$
\Be \label{omlocalization}
 |\om'-\tom'|\ll |\om_d-\tom_d|\approx
1.\Ee
As before we check the standard condition
for a phase function parametrizing a canonical graph for the phase
$(\om', \tom',y)\mapsto \Theta(\om, \tom, y)$. That is, the
determinant of
\begin{multline*}\!\!\!\!\!\begin{pmatrix} \Theta_{\om'\tom'}&\Theta_{\om'y'}& \Theta_{\om'y_d}
\\ \Theta_{y'\tom'}&\Theta_{y'y'}&\Theta_{y'y_d}
\\ \Theta_{y_d\tom'}&\Theta_{y_dy'}&\Theta_{y_dy_d}
\end{pmatrix}
=
\begin{pmatrix} 0 &\Phi_{x'y'}&\Phi_{x'y_d}
\\ \Phi_{y'x'}&\inn{\om-\tom}{\Phi_{xy'y'}}
&\inn{\om-\tom}{\Phi_{xy'y_d}}
\\ \Phi_{y_dx'}&\inn{\om-\tom}{\Phi_{xy_dy'}}
&\inn{\om-\tom}{\Phi_{xy_dy_d}}
\end{pmatrix} \bigg|_{(0,0)}\\
+O(\eps)
\end{multline*}
is bounded away from $0$.
It is easily seen from
\eqref{rankpiL}, \eqref{Phixdyprimeyd}
that this determinant is equal to
$\inn{\om-\tom}{\Phi_{xy_dy_d}}+O(\eps)$ and by
\eqref{omlocalization} this is
equal to $(\om_d-\tom_d)\Phi_{x_dy_dy_d}+O(\eps)$. Thus the
canonical graph condition is satisfied by the fold condition
\eqref{foldpiL} and
\eqref{W-ell-boundfold} follows
from  Lemma 2.3 in \cite{grse}. \qed

\bigskip

\end{document}